\documentclass[12pt]{article}
\usepackage{a4}

\usepackage{amsthm}
\usepackage{amsmath}
\usepackage{graphicx}
\usepackage{enumitem}
\setlist{noitemsep}
\setlist[enumerate,1]{label=(\arabic*)}

\newtheorem{theorem}{Theorem}
\newtheorem{prop}[theorem]{Proposition}
\newtheorem{lemma}[theorem]{Lemma}

\newtheorem{conj}[theorem]{Conjecture}

\newcommand{\sm}{\setminus}

\newcommand{\FF}{\mathcal F}
\newcommand{\HH}{\mathcal H}
\newcommand{\II}{\mathcal I}
\newcommand{\XX}{\mathcal X}

\newcommand{\vl}{l\kern-0.035cm\char39\kern-0.03cm}

\title{\textbf{Long paths and toughness of \emph{k}-trees and chordal planar graphs}\footnote{The research
was supported by the project LO1506 of the Czech Ministry of Education, Youth and Sports
and by the project 17-04611S of the Czech Science Foundation.}
}%
\author{Adam Kabela\thanks{Department of Mathematics, Institute for Theoretical Computer Science,
and European Centre of Excellence NTIS,
University of West Bohemia, Pilsen, Czech Republic. Email: \texttt{kabela@ntis.zcu.cz}.}
}

\date{}

\begin{document}
\maketitle
\begin{abstract}
 We show that every $k$-tree of toughness greater than $\frac{k}{3}$ is Hamilton-connected for $k \geq 3$.
 (In particular, chordal planar graphs of toughness greater than $1$ are Hamilton-connected.)
 This improves the result of Broersma et al. (2007) and generalizes the result of B{\"o}hme et al. (1999).

 On the other hand, we present graphs whose longest paths are short.
 Namely, we construct $1$-tough chordal planar graphs and $1$-tough planar $3$-trees,
 and we show that the shortness exponent of the class is $0$,
 at most $\log_{30}{22}$, respectively.
 Both improve the bound of B{\"o}hme et al.
 Furthermore, the construction provides
 $k$-trees (for $k \geq 4$) of toughness greater than $1$.
 \end{abstract}


\section{Introduction}
We continue the study of Hamiltonicity and toughness
of $k$-trees following Broer\-sma et al.~\cite{ktree}
and of chordal planar graphs
following B{\"o}hme et al.~\cite{chpl}.

We recall that for a positive integer $k$, a \emph{$k$-tree} is either the graph $K_k$
(that is, the complete graph on $k$ vertices) 
or a graph containing a vertex whose neighbourhood induces $K_k$
and whose removal gives a $k$-tree.
Clearly, $k$-trees are chordal graphs.
We recall that the \emph{toughness} of a
graph $G$ is the minimum, taken over all separating sets $X$ of
vertices of $G$, of the ratio of $|X|$ to the number of components of
$G - X$.
The toughness of a complete graph is defined as being infinite.
We say that a graph is \emph{$t$-tough} if its toughness is at least
$t$.

In~\cite{ktree}, Broersma et al. showed that certain level of toughness
implies that a $k$-tree has a Hamilton cycle (see also~\cite{LZ, SW}).

\begin{theorem}\label{broersma}
Let $k \geq 2$.
Every $\frac{k+1}{3}$-tough $k$-tree (except for $K_2$) is Hamiltonian.
\end{theorem}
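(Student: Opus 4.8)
The plan is to translate the toughness hypothesis into rigid structural information about the \emph{clique tree} $T$ of the $k$-tree $G$ (the tree whose nodes are the maximal cliques, each a $(k+1)$-clique, with two nodes adjacent when the corresponding cliques share a common $k$-face), and then to build a Hamilton cycle by a bottom-up construction along $T$. The first step is purely structural: one checks that removing a maximal clique $C$ leaves exactly $\deg_T(C)$ components, so since $G$ is $\frac{k+1}{3}$-tough, $\deg_T(C) \le |C| \big/ \frac{k+1}{3} = 3$; likewise, removing a single $k$-face $S$ can create at most $k \big/ \frac{k+1}{3} = \frac{3k}{k+1} < 3$ components, so each face carries at most one branch away from $C$, and at most two simplicial vertices can share a neighbourhood. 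The key point is that the property \emph{``the clique tree has maximum degree at most $3$''} is inherited by every subtree, whereas toughness itself is not monotone under vertex deletion; phrasing the induction in terms of this hereditary property is what lets the argument go through where a naive deletion induction would fail.

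Next I would root $T$ at a leaf clique and, for each clique $C$ with attaching face $S_C = C \cap \mathrm{parent}(C)$, write $G_C$ for the sub-$k$-tree induced by $C$ together with everything in the subtree below it. I would prove, by induction from the leaves of $T$ upward, a strengthened statement guaranteeing a Hamilton path of $G_C$ whose two endpoints form a suitably flexible pair of vertices of the face $S_C$; this flexibility is essential, since the parent will need to attach to prescribed endpoints. Because $|S_C| = k \ge 2$ each face already offers room to choose endpoints, and because every clique is complete on its $k+1$ vertices, at an internal node I can concatenate the Hamilton paths delivered by its (at most two) children, route the resulting segments through the vertices of $C$, and emit a single Hamilton path of $G_C$ ending in $S_C$. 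Applying the construction at the root, where there is no face to respect, I close the two ends of the assembled path through the vertices of the root clique to obtain a Hamilton cycle.

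The hard part will be the local stitching at the branching (degree-$3$) cliques: one must state the inductive hypothesis with exactly the right set of admissible endpoint pairs, so that the pairs demanded by a parent always lie among those its children can supply, and then verify that the at most two incoming paths together with the free vertices of $C$ can always be spliced into one path without stranding a vertex or forcing a repeated endpoint. This is where the constant $\frac{k+1}{3}$ is used in its sharp form---degree $3$ is permitted but degree $4$ is not---so the counting of path-segments against the $k+1$ available clique vertices is tight and must be carried out with care, including attention to possible parity constraints. Finally I would dispose of the base cases: the clique $K_k$ itself (Hamiltonian for $k \ge 3$, and excepted for $k = 2$ since $K_2$ has no cycle), together with the degenerate configurations in which a face carries two simplicial vertices, which are handled directly.
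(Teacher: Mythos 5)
Your structural extraction from toughness is essentially sound as far as it goes, modulo one slip: removing a maximal clique $C$ leaves \emph{at least} $\deg_T(C)$ components, not exactly that many (clique trees of $k$-trees are not unique), but the inequality is the direction you need, so the conclusion $\deg_T(C)\leq 3$ stands. The real problems are two. First, the entire content of the theorem sits in the step you explicitly defer: formulating the admissible endpoint pairs and proving that, at a degree-$3$ clique, the two incoming Hamilton paths plus the free vertices of $C$ can always be spliced into a single path ending in the parent face. Note that any two $k$-faces of a $(k+1)$-clique already share $k-1$ vertices, so collisions between the endpoints the children supply and those the parent demands are the typical case, not a boundary case; without this lemma there is no proof. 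Second, and more fundamentally, your plan invokes toughness exactly once, and then runs the induction on the hereditary property ``clique tree of maximum degree at most $3$, one branch per face.'' For $k\geq 3$ this property is strictly weaker than the hypothesis: if two \emph{adjacent} cliques $C_1,C_2$ of the clique tree both have degree $3$, then $G-(C_1\cup C_2)$ has at least four components (the four pendant branches; none of them is contained in $C_1\cup C_2$, since $k$-trees are $K_{k+2}$-free), so the toughness is at most $\frac{k+2}{4}<\frac{k+1}{3}$ once $k\geq 3$. Hence a $k$-tree whose clique tree is, say, a complete binary tree satisfies all of your structural conditions while badly violating the toughness hypothesis; your induction, if it worked, would prove the much stronger claim that all such $k$-trees are Hamiltonian --- a claim you neither state nor justify, and for which the splicing step is exactly where one would expect it to break. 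You must either prove that stronger claim or carry toughness-derived information (e.g.\ that branching cliques cannot be close together) down the induction.

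This also exposes the false premise behind your framing: ``toughness itself is not monotone under vertex deletion'' is not an obstacle here, because toughness does not decrease when a simplicial vertex is removed (Proposition~\ref{simpl}), and therefore does not decrease when an entire branch of the clique tree is pruned, as a branch of a $k$-tree can be deleted one simplicial vertex at a time. The paper's proof of (a strengthening of) this statement, Lemma~\ref{ham}, runs on exactly this heredity: since $\frac{k+1}{3}$-tough implies toughness greater than $\frac{k}{3}$, Lemma~\ref{twig} yields a twig $v$ whose bud $S$ has at most two vertices and forms a squeeze, $G-S$ is again a $k$-tree with the same toughness bound, the induction gives a Hamilton cycle of $G-S$, and Lemma~\ref{path} re-inserts $S$ locally around $v$. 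Viewing the cycle as a path with $v$ in its interior makes all endpoint bookkeeping trivial, so no global endpoint-management scheme over the clique tree is ever needed. I recommend reworking your argument so that the full toughness hypothesis, not merely the degree-$3$ consequence, is available at every level of the recursion.
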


In the same paper, they constructed $1$-tough $k$-trees
which have no Hamilton cycle for every $k \geq 3$.

An older result considering toughness and Hamiltonicity in another 
subclass of chordal graphs is due to
B{\"o}hme et al.~\cite{chpl} who showed the following:

\begin{theorem}\label{bohme}
Every chordal planar graph (on at least $3$ vertices) of toughness greater than $1$ is Hamiltonian.
\end{theorem}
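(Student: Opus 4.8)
The plan is to turn the toughness assumption into a connectivity-and-separator statement, and then to reduce Hamiltonicity to the classical theorems on $4$-connected planar graphs by decomposing $G$ along its separating triangles. (If $G$ is complete it is trivially Hamiltonian, so assume it is not.) Note that Theorem~\ref{broersma} is too weak to quote directly here, since for $k=3$ it would demand toughness $\frac{4}{3}$ rather than merely greater than $1$; the planarity must be exploited instead.

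First I would record what toughness greater than $1$ buys us. If $X$ is any separating set and $c(G-X)$ denotes the number of components of $G-X$, then $|X| > c(G-X)$; in particular $G$ has no cut vertex and no $2$-separator, so $G$ is $3$-connected, and every minimal separator $S$ satisfies $c(G-S) \le |S|-1$. Since $G$ is chordal, every minimal separator induces a clique, and since $G$ is planar it contains no $K_5$, hence $\omega(G)\le 4$. Combining these, every minimal separator of $G$ is either a triangle (and then $G$ minus it has at most two components) or a $K_4$ (at most three components). This is the structural backbone of the argument.

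Next I would set up an induction on $|V(G)|$. If $G$ is $4$-connected, I invoke Tutte's theorem that every $4$-connected planar graph is Hamiltonian (and, for the endpoint flexibility needed below, Thomassen's strengthening that such graphs are Hamilton-connected). Otherwise $G$ is not $4$-connected, so it has a separator of size $3$; any minimal separator it contains then has exactly three vertices and induces a triangle $T=\{a,b,c\}$, with $G-T$ having components $C_1,C_2$ (at most two). Writing $G_i=G[C_i\cup T]$, each $G_i$ is again chordal, planar, and strictly smaller, so I want the inductive hypothesis to supply suitable Hamilton paths in the pieces and then stitch them along the clique $T$. Concretely, if $G_2$ has a Hamilton path from $a$ to $b$ (covering $C_2\cup T$) and $G[C_1\cup\{a,b\}]$ has a Hamilton path from $a$ to $b$ (covering $C_1$ in its interior), then concatenating the two at $a$ and at $b$ yields a Hamilton cycle of $G$ using every vertex of $T$ exactly once. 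Because $T$ is a clique, its three edges give enough freedom to realise such endpoint patterns, which is exactly why the inductive statement must be about Hamilton paths with prescribed endpoints in the attaching clique (and about the same pieces with one separator vertex routed to the other side), rather than merely about Hamiltonicity.

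The hard part is to make this induction self-supporting: I must choose the inductive hypothesis — a list of admissible endpoint pairs in each attaching clique, together with the variants in which a single separator vertex is suppressed — strong enough to be usable in the gluing yet preserved when passing to each $G_i$. The $K_4$ separators are the most delicate case, since there $G-S$ may have three components and one must thread a single Hamilton cycle through three pieces meeting in a common tetrahedron, using the six edges of $S$ to interleave them; showing that the toughness bound $c(G-S)\le 3$ is precisely what permits a consistent endpoint assignment is where the hypothesis is used to full strength. Once the endpoint bookkeeping across triangles and $K_4$'s is shown to propagate, the base case supplied by Tutte's and Thomassen's theorems closes the induction.
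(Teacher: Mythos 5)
Your structural observations are correct and are a reasonable start: toughness greater than $1$ does give $3$-connectedness, chordality makes every minimal separator a clique, planarity caps cliques at $K_4$, and the toughness bound limits the number of components after deleting a separator. But from that point on the proposal is a plan rather than a proof, and the plan has a genuine gap at its core: the induction is not self-supporting, for a reason you gesture at but never resolve. The pieces $G_i = G[C_i \cup T]$ of a clique-separator decomposition do \emph{not} inherit toughness greater than $1$ (a set $X \subseteq V(G_1)$ may shatter $G_1$ into many components that, in $G$, were held together through $C_2$; one only gets $c(G_1 - X) \leq c(G - X) + |T| - 1$, which permits the toughness of $G_1$ to drop below $1$). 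Consequently you cannot apply to the pieces either the structural facts (their separators need not be small cliques with few components) or the Tutte/Thomassen base case in the way the induction requires. You acknowledge that a stronger inductive hypothesis with ``endpoint bookkeeping'' is needed and must be shown to propagate, but you never formulate that hypothesis, never verify it survives passage to the pieces, and explicitly leave the $K_4$-separator case (three components threaded through six edges) undone. That bookkeeping and its propagation \emph{is} the theorem; without it the argument does not close.

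For comparison, the paper takes a route that is designed precisely to avoid this propagation problem. It first shows (Lemma~\ref{coin}, via the characterizations in Lemmas~\ref{3} and~\ref{emb}) that a graph of toughness greater than $1$ is chordal planar if and only if it is a $3$-tree (or $K_1$ or $K_2$), and then proves the stronger Theorem~\ref{main} by induction on $3$-trees. The crucial difference is the induction step: instead of cutting along separators (which destroys toughness), the paper deletes \emph{simplicial} vertices --- the bud of a twig --- and by Proposition~\ref{simpl} such deletions cannot decrease the toughness bound, so the smaller graph genuinely satisfies the same hypothesis (Lemma~\ref{twig}); the deleted vertices are then re-inserted into a path by the local ``squeeze'' extension (Lemma~\ref{path}). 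If you want to salvage your decomposition approach you would essentially be reconstructing the original argument of B\"{o}hme, Harant and Tk\'{a}\v{c}, and the hard content would still be exactly the endpoint-pattern invariant you left unspecified.
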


In~\cite{gerl}, Gerlach
generalized Theorem~\ref{bohme} for planar graphs whose separating cycles of length at least four have chords.
In this paper, we present a different generalization of Theorem~\ref{bohme}
which also improves the result of Theorem~\ref{broersma}. 

The mentioned results were motivated by the following conjecture stated by Chv\'{a}tal~\cite{chva}.

\begin{conj}\label{conj:ch}
  There exists $t$ such that every $t$-tough graph
  (on at least $3$ vertices) is Hamiltonian.
\end{conj}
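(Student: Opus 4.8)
The statement to be addressed is Chv\'{a}tal's toughness conjecture, which has remained open since 1973 and is among the central unsolved problems of Hamiltonian graph theory; so rather than a complete proof I can only propose a realistic line of attack and say where it stalls. A proof would have to produce a single universal constant $t$ that works for \emph{all} graphs at once, and the fundamental tension is that toughness is a global min-cut condition whereas Hamiltonicity demands a rigid spanning structure. No general mechanism is known that converts the former into the latter, which is exactly why the problem is hard.

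The most promising route, and the one the present paper exemplifies, is to prove the conjecture for progressively richer graph classes in the hope of isolating a technique that survives generalization. For structured classes one exploits the extra geometry: in $k$-trees and chordal graphs one uses a perfect elimination ordering together with Theorem~\ref{broersma} and Theorem~\ref{bohme} to build a Hamilton cycle greedily, splicing in simplicial vertices while toughness guarantees that no separating set leaves more components than the surrounding structure can absorb. First I would try to push such structural inductions just beyond chordality --- say to graphs of bounded clique-tree width, or to classes closed under the relevant contraction operations --- while carefully tracking how large a toughness threshold each enlargement of the class forces. The goal would be to test whether that threshold stays bounded as the class expands toward all graphs.

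The hard part --- indeed the reason the conjecture is open --- is that this boundedness almost certainly fails for any purely structural induction. On the lower-bound side, the constructions of Bauer, Broersma and Veldman yield $t$-tough non-Hamiltonian graphs with $t$ arbitrarily close to (just below) $\frac{9}{4}$, so any valid universal constant must satisfy $t \geq \frac{9}{4}$, and Chv\'{a}tal's original guess of $t = 2$ is therefore false. On the upper-bound side there is no known argument ruling out non-Hamiltonian graphs of unbounded toughness at all. The short-path examples produced later in this paper --- $1$-tough chordal planar graphs and $1$-tough planar $3$-trees --- illustrate just how delicate the interaction between toughness and long paths already is inside very tame classes. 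I therefore expect the decisive obstacle to be the absence of any global invariant that simultaneously controls every separating set and forces one spanning cycle; bridging that gap, rather than any single calculation, is what a genuine proof would demand, and it is why I would treat this statement as an open conjecture rather than as a theorem admitting a short plan.
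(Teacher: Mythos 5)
You are right to refuse to prove this statement: it is Chv\'{a}tal's toughness conjecture, which the paper itself states only as a conjecture and explicitly notes remains open, so there is no proof in the paper to compare against. Your factual framing is also accurate --- the Bauer--Broersma--Veldman construction of non-Hamiltonian graphs with toughness arbitrarily close to $\frac{9}{4}$ rules out Chv\'{a}tal's original value $t=2$, and no upper bound on a sufficient $t$ is known for general graphs --- so treating the statement as open rather than as a theorem is exactly the correct response.
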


Conjecture~\ref{conj:ch} remains open.
Partial results are known for some restricted classes of graphs; for instance, for
different subclasses of chordal graphs (see~\cite{ktree, chpl, all, spli, KKS}),
and for the class of chordal graphs itself (see~\cite{18} or~\cite{10}).
The best known lower bounds regarding Conjecture~\ref{conj:ch}
for chordal graphs and for general graphs were shown in~\cite{74}.
The study of toughness of graphs (and Conjecture~\ref{conj:ch} in particular)
is well-documented by a series of survey papers,
we refer the reader to~\cite{surv} (for more recent results, see~\cite{Broersma}).

In addition to the result of Theorem~\ref{bohme}, 
B{\"o}hme et al.~\cite{chpl} presented $1$-tough
chordal planar graphs whose longest cycles are relatively short
(compared to the number of vertices of the graph);
and using the notion of shortness exponent by Gr{\"u}nbaum and Walther~\cite{sef},
they argued the following: 

\begin{theorem}\label{89}
The shortness exponent of the class of $1$-tough chordal planar graphs is at most $\log_9 8$.
\end{theorem}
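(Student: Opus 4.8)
The plan is to exhibit, for every $m\ge 0$, a $1$-tough chordal planar graph $G_m$ with $|V(G_m)|\to\infty$ and $\frac{\log c(G_m)}{\log|V(G_m)|}\to\log_9 8$, where $c(G)$ is the circumference (the length of a longest cycle); since the shortness exponent of the class is the liminf of this quantity over the class, such a family gives the stated upper bound. I would build the graphs by a self-similar substitution. Fix a finite plane chordal graph $F$, the \emph{frame}, taken to be a planar $3$-tree in which nine of the triangular faces are designated as \emph{slots}. Let $G_0$ be a triangle, and let $G_m$ be obtained from $F$ by pasting a copy of $G_{m-1}$ into each of the nine slots, identifying the three vertices bounding a slot with the outer triangle of the inserted copy. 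Since each slot is bounded by a $3$-clique shared between $F$ and the inserted copy, the result is again a planar $3$-tree; in particular every $G_m$ is chordal and planar, so membership in the class needs no separate check.

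Two bookkeeping ingredients remain. First, pasting nine copies and identifying three boundary vertices of each yields $|V(G_m)|=9\,|V(G_{m-1})|+O(1)$, so $|V(G_m)|=\Theta(9^m)$. Second, for $1$-toughness I would induct on $m$: given a separating set $X$ of $G_m$, I would bound the number of components of $G_m-X$ by charging them to $X\cap F$ and to the parts of $X$ inside the individual copies, using that a slot boundary is a triangle (which by itself creates no more components than the vertices deleted from it) together with the inductive $1$-toughness of each copy. This reduces the toughness of $G_m$ to a finite verification on $F$.

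The core of the argument concerns \emph{terminal paths}, meaning paths of $G_m$ whose two endpoints lie on the outer triangle; let $p_m$ be the maximum length of such a path. The tool is the separating-triangle observation: if a triangle $\{x,y,z\}$ bounds a plane region $R$, then any cycle, and any path both of whose endpoints lie outside $R$, meets $R$ in a single subpath (possibly empty) with both ends in $\{x,y,z\}$, because each of $x,y,z$ carries only two edges of the curve and so the boundary admits at most one crossing subpath. Applied to the nine copies, the trace of a terminal path of $G_m$ inside each copy is itself a terminal path of that copy, of length at most $p_{m-1}$. The frame $F$ is to be engineered---by nesting the nine slots behind separating triangles so that the one-subpath-per-triangle rule compounds---so that every terminal path enters at most eight of the nine copies. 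This gives the recursion $p_m\le 8\,p_{m-1}+O(1)$, hence $p_m=O(8^m)$. Because every cycle of $G_m$ likewise meets each copy in at most one terminal path of that copy, $c(G_m)\le 9\,p_{m-1}+O(1)=O(8^m)$. Together with $|V(G_m)|=\Theta(9^m)$ this yields $\frac{\log c(G_m)}{\log|V(G_m)|}\to\frac{\log 8}{\log 9}=\log_9 8$.

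The main obstacle is the design of $F$, which must meet two competing demands at once: it has to be sufficiently well connected that deleting any vertex set leaves no more components than vertices removed (the $1$-toughness), while its nine slots must be nested so rigidly that no terminal path can run through all of them (the factor $8$). Exhibiting one explicit frame that satisfies both---the toughness by a finite case analysis of its separating sets, and the path bound by iterating the separating-triangle lemma along its nested triangles---is the delicate point, and it is precisely what fixes the per-level multipliers at $9$ and $8$.
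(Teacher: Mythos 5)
Your scaffolding---a fixed block with nine designated triangular slots, recursive substitution, $|V(G_m)|=\Theta(9^m)$, a per-level drop from $9$ to $8$ in the circumference, and the final limit computation---is precisely the standard shortness-exponent construction. It is the same scheme that the paper formalizes in Lemma~\ref{cyc} and instantiates (with a block of $30$ white vertices, of which every cycle contains at most $22$) to prove the stronger Theorem~\ref{79}; note that the paper does not reprove Theorem~\ref{89} at all, but cites B{\"o}hme et al.\ for it, and separately proves the stronger Theorem~\ref{0} by a genuinely different route (squares of balanced cubic trees, Section~\ref{s6}). Measured against either, your proposal has a genuine gap exactly where the content of the theorem lies: the frame $F$ is never exhibited. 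The existence of a single finite chordal planar block that is simultaneously $1$-tough and such that no cycle (equivalently, in your setup, no terminal path) runs through all nine slots \emph{is} the theorem; the recursion, the vertex count, and the logarithm at the end are routine bookkeeping. Deferring $F$ to ``the delicate point'' leaves the statement unproved.

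Moreover, the mechanism you offer for building $F$ is demonstrably insufficient, so the gap is not a mere missing verification. The one-subpath-per-separating-triangle rule does not ``compound'' on its own: take $K_4$ with outer triangle $xyz$, centre $c$, and the three faces at $c$ as slots; the path that enters slot $xyc$ at $x$ and leaves at $y$, enters slot $yzc$ at $y$ and leaves at $c$, then enters slot $xzc$ at $c$ and leaves at $z$ is a terminal path meeting each closed slot in exactly one subpath, yet it visits all three slots. There is also a real tension between your two demands: the cheap way to force every cycle to miss a slot---make the white (slot) vertices outnumber the remaining black vertices, so that a cycle, which must interleave the independent white vertices with blocks of black ones, cannot contain all whites---immediately kills $1$-toughness, since deleting all black vertices then leaves more components than deleted vertices. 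Resolving this tension is exactly what the known blocks are engineered for: B{\"o}hme et al.'s graph, and the paper's $H_0$ (Figure~\ref{f:79}), whose $1$-toughness requires a discharging argument (Proposition~\ref{1-tough}) and whose cycle/path bound requires a structure-specific counting observation (a path in $H_0$ contains at most $22+z$ white vertices, where $z$ is the number of its white ends, in the proof of Proposition~\ref{short79}). Until you produce an explicit $F$ and verify both properties for it, your argument only restates the standard reduction.
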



We recall that the \emph{shortness exponent} of a
class of graphs $\Gamma$ is the $\liminf$,
taken over all infinite sequences $G_n$ of non-isomorphic graphs of $\Gamma$
(for $n$ going to infinity),
of the logarithm of the length of a longest cycle in $G_n$
to base equal to the number of vertices of $G_n$.
For more results considering the shortness exponent,
see the survey~\cite{owens}.
To conclude this section, we mention that
by the combination of results of Moser and Moon~\cite{momo}
and Chen and Yu~\cite{3-conn},
the shortness exponent of the class of $3$-connected planar graphs equals $\log_3 2$.
%


\section{New results}
\label{s2}
We recall that a graph is \emph{Hamilton-connected} if for every pair of its vertices,
there is a Hamilton path between them.
Clearly, every Hamilton-connected graph (on at least $3$ vertices) is Hamiltonian.
Using a simple argument, we improve the result of Theorem~\ref{broersma} as follows.
(This also improves the result of~\cite{LZ}
since Hamilton-connected chordal graphs are, in fact, panconnected.)

\begin{theorem}\label{main}
Let $k \geq 3$. Every $k$-tree of toughness
greater than $\frac{k}{3}$ is Hamilton-connected.
Furthermore, every $1$-tough $2$-tree (except for $K_2$) is Hamiltonian.
\end{theorem}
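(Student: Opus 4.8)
The plan is to prove the two parts of Theorem~\ref{main} in tandem, leveraging Theorem~\ref{broersma} as a black box for the Hamiltonicity component and then upgrading from Hamiltonicity to Hamilton-connectedness by a reduction trick. Let me think about how to structure this.

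The key observation for connecting the two statements is a classical "apex" construction. If $G$ is a $k$-tree and I want a Hamilton path between two prescribed vertices $u$ and $v$, I should consider adding a new vertex $w$ adjacent to both $u$ and $v$; a Hamilton cycle in the augmented graph through $w$ yields exactly a Hamilton path from $u$ to $v$ in $G$. But to apply Theorem~\ref{broersma} to the augmented graph, I need that graph to again be a $k$-tree (or $(k{-}1)$-tree) of sufficient toughness.

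So the heart of the argument should run as follows. First I would verify the key structural fact: if $G$ is a $k$-tree with $k\ge 3$ and $u,v$ are any two vertices, then $u$ and $v$ have a common neighbour, indeed any two vertices of a $k$-tree lie in a common clique or are joined through the tree-structure so that I can find a vertex adjacent to both (this uses $k\ge 3$ and chordality). Then the augmented graph $G'=G+w$ with $w$ adjacent only to $u$ and $v$ is again a $k$-tree with $k\ge 3$, or I instead contract/identify cleverly so the result is a $(k+1)$-tree on which Theorem~\ref{broersma} applies with threshold $\frac{(k+1)+1}{3}$. Next I would check the toughness bookkeeping: adding a degree-$2$ apex $w$ to a graph of toughness $>\frac{k}{3}$ produces a graph whose toughness is at least $\frac{k+1}{3}$, because any separating set of $G'$ either avoids $w$ (and then $w$ sits inside a component, so the count is controlled by $G$'s toughness) or contains $w$, in which case removing $w$ costs one vertex while creating at most the components that $G-(X\setminus\{w\})$ had. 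Running this inequality carefully gives exactly the $\frac{k+1}{3}$ threshold needed to invoke Theorem~\ref{broersma} on $G'$, and the resulting Hamilton cycle must use both edges $wu$ and $wv$ since $w$ has degree $2$, yielding the desired $u$–$v$ Hamilton path in $G$.

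For the final sentence about $2$-trees, I would argue separately and more directly, since the main reduction is stated for $k\ge 3$. A $1$-tough $2$-tree can be handled either by a short induction on the tree-construction order — at each step a new vertex of degree $2$ is attached to an edge lying on the current Hamilton cycle, and $1$-toughness forces the configuration to remain Hamiltonian — or by specializing the apex argument: adding the apex to a $1$-tough $2$-tree produces a structure to which the $k=2$ case of Theorem~\ref{broersma} (threshold $\frac{2+1}{3}=1$) applies, recovering Hamiltonicity. The main obstacle, and the place I expect the real work to live, is the toughness accounting in the augmentation step: I must ensure that inserting $w$ does not drop the toughness below the critical value, and in particular that the worst separating sets of $G'$ really are controlled by those of $G$ — this requires treating the case where $w$ is isolated by the cut and the case where $w$ joins two otherwise-separate components, and showing that in neither case does the ratio $|X|/c(G'-X)$ fall below $\frac{k+1}{3}$. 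Everything else is essentially structural bookkeeping about $k$-trees and the elementary correspondence between Hamilton cycles through a degree-$2$ vertex and Hamilton paths between its two neighbours.
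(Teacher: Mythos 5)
There is a genuine gap here, and it sits exactly where you predicted ``the real work'' would live---but it cannot be repaired. The apex reduction to Theorem~\ref{broersma} fails on both counts. First, the augmented graph $G' = G + w$ with $w$ adjacent only to $u$ and $v$ is never a $k$-tree for $k \geq 3$: every $k$-tree on more than $k$ vertices has minimum degree $k$, while $w$ has degree $2$; moreover, if $u$ and $v$ are non-adjacent, then $G'$ is not even chordal (a shortest induced $u$--$v$ path of $G$ together with $w$ forms an induced cycle of length at least $4$). So Theorem~\ref{broersma} simply does not apply to $G'$, and the same minimum-degree obstruction rules out its being a $(k+1)$-tree. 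Second, the toughness bookkeeping is false: whenever $G - \{u,v\}$ is nonempty, the set $\{u,v\}$ separates $w$ from the rest of $G'$, so $G'$ has toughness at most $1$, strictly below the threshold $\frac{k+1}{3} \geq \frac{4}{3}$ you need for $k \geq 3$. Patching $w$ up to a simplicial vertex of degree $k$ (attaching it to a $k$-clique containing $u$ and $v$, assuming one exists) does restore the $k$-tree property, but then the Hamilton cycle through $w$ need not use the edges $wu$ and $wv$, and by Proposition~\ref{simpl} the toughness still cannot exceed that of $G$, which is only guaranteed to be greater than $\frac{k}{3}$, not $\frac{k+1}{3}$.

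This last point is the structural obstruction to the entire plan: Theorem~\ref{main} strictly improves the toughness threshold of Theorem~\ref{broersma} (from $\frac{k+1}{3}$ down to $\frac{k}{3}$) even for plain Hamiltonicity, so no black-box reduction to Theorem~\ref{broersma} through a construction that does not increase toughness can possibly succeed; any correct proof must redo the Hamiltonicity argument at the lower threshold. That is what the paper does: it argues by induction on the number of vertices, using \emph{twigs} (non-universal vertices whose degree-$k$ neighbours form the \emph{bud}) whose buds can be deleted while preserving both the class and the toughness bound (Lemma~\ref{twig}), a local path-extension lemma (Lemma~\ref{path}), the existence of two non-adjacent twigs (Lemma~\ref{twoGood}), and, for the delicate case where the two prescribed ends are the only twigs, spanning $\Theta$-subgraphs (Proposition~\ref{stave}); Hamiltonicity at the improved threshold is established along the way (Lemma~\ref{ham}). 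Only your final remark survives scrutiny: the $2$-tree statement is precisely the $k=2$ case of Theorem~\ref{broersma}, so quoting it there would be legitimate, though the paper reproves it independently.
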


The proof of Theorem~\ref{main} is given in Section~\ref{s3}.
We also show that under this toughness restriction a graph
is chordal planar if and only if it is a $3$-tree or $K_1$ or $K_2$
(see Lemma~\ref{coin}).
In particular, Theorem~\ref{main} implies that 
chordal planar graphs of toughness greater than $1$ are Hamilton-connected
(it generalizes the result of Theorem~\ref{bohme}).

In the other direction,
we present $1$-tough chordal planar graphs
and $1$-tough planar $3$-trees whose longest paths and cycles are relatively short.

In particular, for every $\varepsilon > 0$, there exists a $1$-tough chordal planar graph $G$
whose longest path has less than $|V(G)|^{\varepsilon}$ vertices. 
In Section~\ref{s6}, we note that such graphs
can be obtained by considering the square of particular trees.
Consequently, we adjust the result of Theorem~\ref{89} as follows:

\begin{theorem}\label{0}
The shortness exponent of the class of $1$-tough chordal planar graphs is $0$.
\end{theorem}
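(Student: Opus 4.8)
The plan is to exhibit an explicit infinite family of $1$-tough chordal planar graphs whose longest cycles are exponentially short in the number of vertices, driving the relevant ratio (and hence the liminf) to $0$. The excerpt itself tells me where to look: the paper remarks that such graphs arise as the square of particular trees. So the starting object I would build is the square $T^2$ of a suitably chosen tree $T$—recall that the square of a tree is chordal and, for a planar (indeed outerplanar) tree embedding, one can arrange $T^2$ to remain planar. The natural candidate is a balanced rooted tree in which every internal vertex has a fixed number of children and which has depth $d$; the number of vertices grows like a power of the branching factor in $d$, while any path or cycle in $T^2$ can only climb and descend through the levels a bounded number of times, so its length is bounded by something linear in $d$ (up to the branching). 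Taking the branching large relative to the path-length growth is what forces the shortness exponent to $0$.

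The key steps, in order, would be the following. First I would fix the construction: let $T_d$ be the rooted tree where each non-leaf has exactly $m$ children and every leaf is at depth $d$, and set $G_d = T_d^2$. Second, I would verify the three structural properties of $G_d$: (1) $G_d$ is chordal, since the square of any tree is chordal; (2) $G_d$ is planar, using a radial planar embedding of $T_d$ so that the extra ``distance-two'' edges can be drawn without crossings; (3) $G_d$ is $1$-tough, which I would check by showing that removing any set $X$ leaves at most $|X|$ components—here the cut vertices of $T$ become small separators in $T^2$ and one controls the number of resulting components against $|X|$. Third, and most importantly, I would bound the length of a longest cycle (or path) in $G_d$. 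The mechanism is that in $T^2$ a Hamiltonian-type path must essentially traverse the tree, but each subtree can be entered and exited only a bounded number of times through its root-neighbourhood, which is a separator of bounded size; a counting/recursion on this ``entry budget'' yields $\mathrm{circ}(G_d) = O(c^{\,d})$ for some $c < m$ (or polynomial in the number of leaves with exponent strictly less than $1$). Fourth, I would compute $\liminf_d \frac{\log \mathrm{circ}(G_d)}{\log |V(G_d)|}$ and show it equals $0$ by letting the branching $m = m(d) \to \infty$ fast enough that $|V(G_d)|$ grows super-polynomially while the circumference stays polynomially bounded; the ratio of logarithms then tends to $0$.

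\textbf{The main obstacle} I anticipate is the circumference bound in step three: it is easy to believe that paths in the square of a highly-branching tree are short, but converting this intuition into a clean upper bound of the form $\mathrm{circ}(G_d) \le f(d)$ with $f$ growing much slower than $|V(G_d)|$ requires a careful separator argument. Concretely, one must argue that for each internal vertex $v$ of $T$, the closed neighbourhood of $v$ in $T$ acts as a separator in $T^2$ of size $O(1)$ (independent of $m$), so a cycle can pass ``below'' $v$ only a bounded number of times; summing this over a level of the tree and recursing gives the needed bound. The delicate point is getting the dependence on the branching factor $m$ right—the separator size must not blow up with $m$, otherwise the adversary's cycle could be long. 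Once this separator-size-versus-branching tension is resolved, the toughness verification (which uses the same separators) and the final limit computation are routine, and the conclusion that the shortness exponent is $0$ follows immediately from the definition quoted in the excerpt, improving Theorem~\ref{89}.
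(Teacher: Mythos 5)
Your overall idea---squares of trees, exactly as the paper's Section~\ref{s6}---is the right starting point, but your concrete plan has a fatal flaw in the planarity step, and this flaw propagates to the final limit computation. By the Harary--Karp--Tutte criterion (Proposition~\ref{p} in the paper), $T^2$ is planar if and only if $T$ has no vertex of degree greater than $3$: if a vertex of $T$ has four or more neighbours, then that vertex together with four of its neighbours induces $K_5$ in $T^2$, since neighbours of a common vertex are pairwise at distance two. So your tree $T_d$ in which every internal vertex has $m$ children already fails planarity for $m \geq 3$, and your key move in step four---letting $m = m(d) \to \infty$ so that $|V(G_d)|$ grows super-polynomially while the circumference stays polynomial---is simply unavailable. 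No ``radial embedding'' can circumvent this; the obstruction is a $K_5$ subgraph, not a drawing issue.

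The second, related problem is that with $m$ forced to be bounded (in fact, maximum degree $3$, i.e.\ branching $2$), your claimed circumference bound $\mathrm{circ}(G_d) = O(c^d)$ with $c < m$ would only give $\log \mathrm{circ}(G_d)/\log|V(G_d)| \to \log c/\log m > 0$, a positive upper bound on the shortness exponent rather than $0$. What is missing is a much stronger bound on the circumference, and this is exactly what the paper supplies: for a balanced cubic tree $T$ of radius $r$ (so $n = 3\cdot 2^r - 2$), the Neuman/Harary--Schwenk characterization (Theorem~\ref{square}) says the longest cycle of $T^2$ corresponds to a largest $S(K_{1,3})$-free subtree of $T$, and a counting argument shows this subtree has only $4r$ vertices. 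Thus the circumference is \emph{logarithmic} in $n$, not exponential in the depth, and the exponent $\lim_{n\to\infty}\log_n\bigl(4\log_2\tfrac{n+2}{3}\bigr) = 0$ follows with bounded branching and no separator recursion at all. Your ``entry budget'' separator argument (the separator $\{v,\mathrm{parent}(v)\}$ does have size $2$, so that intuition is sound) could perhaps be pushed to a polynomial-in-$r$ bound with more care, but as stated your recursion targets the wrong order of magnitude, and the route through unbounded branching cannot be repaired within the class of planar graphs.
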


We remark that the graphs constructed in~\cite{chpl}
are $3$-connected, so the bound $\log_9 8$ of Theorem~\ref{89} also applies to the shortness exponent
of the class of $1$-tough planar $3$-trees (see Lemma~\ref{3}).
In Section~\ref{s7}, we use the standard construction for bounding the shortness exponent
(for more details regarding this construction, see for instance~\cite{owens} or~\cite{update}),
and we improve this bound by the following:

\begin{theorem}\label{79}
The shortness exponent of the class of $1$-tough planar $3$-trees is at most $\log_{30}{22}$.
\end{theorem}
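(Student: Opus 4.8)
The plan is to apply the standard iterated-substitution technique for shortness-exponent upper bounds (see~\cite{owens, update}). I would first design a single generator gadget $H$ that is a planar $3$-tree drawn inside a triangular face, with the three corner vertices of that face as its \emph{terminals}, and I would then produce a sequence $G_1, G_2, \dots$ by repeatedly substituting scaled copies of $H$ into the triangular faces of the current graph. The ratio $\log_{30}22$ prescribes the two parameters the gadget must realise: each substitution should replace one triangular face by a copy of $H$ containing $30$ new triangular faces (the sites for the next round), while any cycle of the ambient graph may remain active in at most $22$ of those $30$ child-faces. Granting this, after $k$ rounds we have $|V(G_k)| = \Theta(30^k)$, whereas every cycle of $G_k$ meets at most $22^k$ of the elementary faces and hence has $O(22^k)$ vertices, so that
\[
\liminf_{k\to\infty}\frac{\log(\text{length of longest cycle of }G_k)}{\log|V(G_k)|}\le\frac{\log 22}{\log 30}=\log_{30}22.
\]

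The heart of the argument is the \emph{traversal bound} for the gadget. Writing $a,b,c$ for the terminals, the global cycle meets a substituted copy of $H$ in a collection of paths whose endpoints lie on $\{a,b,c\}$; I must show that any such family of paths is active in at most $22$ of the $30$ child-faces. The gadget should therefore be assembled around a central bottleneck configuration so that, of the several pendant sub-triangles a path could enter through a terminal, planarity together with the placement of $a,b,c$ forces a fixed proportion to be skipped---this is precisely what yields the numbers $30$ and $22$. Ruling out every way a path might weave through the interior of $H$ and cover more than $22$ of its faces is a finite but delicate planar case analysis, and I expect it to be the main obstacle.

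With the gadget and its traversal bound in place, the remaining verifications are comparatively routine. Planarity is immediate, since each substitution is drawn strictly inside a triangular face. The $3$-tree property is preserved because $H$ is itself a $3$-tree grown from its terminal triangle, so inserting it into a triangle of an existing $3$-tree again gives a $3$-tree: every newly added vertex still has a simplicial neighbourhood isomorphic to $K_3$. The one step demanding genuine care is $1$-toughness. Using that a minimal separator of a $3$-tree is a triangle, I would bound the number of components of $G_k - X$ for an arbitrary separating set $X$ by analysing how $X$ meets the nested terminal triangles created by the substitutions and invoking the $1$-toughness of each individual copy of $H$; note that by Theorem~\ref{main} the toughness cannot exceed $1$ here, since otherwise the graphs would be Hamilton-connected and could not have short cycles. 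Combining the traversal bound with the vertex count then gives the stated value $\log_{30}22$, improving the bound $\log_9 8$ of Lemma~\ref{3}.
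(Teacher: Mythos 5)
Your high-level strategy coincides with the paper's: both use the standard iterated-substitution construction for bounding the shortness exponent, built on a planar $3$-tree gadget with three terminals, $30$ substitution sites, and the property that a cycle can use at most $22$ of those sites; the limit computation then gives $\log_{30}22$ (the paper does this bookkeeping via Lemma~\ref{cyc} and Proposition~\ref{short79}, counting white vertices rather than child-faces, which is the same thing since the sites are simplicial vertices of degree~$3$).

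However, there is a genuine gap, and it sits exactly where you yourself locate ``the main obstacle'': you never exhibit the gadget, never prove the traversal bound, and consequently cannot verify toughness. In your write-up the numbers $30$ and $22$ are not derived from any construction --- they are read off the statement of the theorem and imposed on a hypothetical gadget (``the ratio $\log_{30}22$ prescribes the two parameters the gadget must realise''). No configuration is described that forces every cycle to miss at least $8$ of the $30$ sites, and the ``finite but delicate planar case analysis'' is deferred, not performed; similarly, the $1$-toughness argument cannot even begin, since there is no concrete graph whose separating sets one could analyse. This missing material is precisely the content of the paper's proof: it constructs an explicit graph $H_0$ on $71$ vertices by planting a copy of the basic $3$-twig $B$ into each highlighted triangle of a small planar $3$-tree, so that the $30$ white vertices (simplicial, degree $3$) are the sites; it attaches each new copy by joining $u_1,u_2,u_3$ to $1,2,3$ neighbours of the replaced white vertex (which keeps the graphs planar $3$-trees); it proves $1$-toughness of $H_0^+$ by a discharging argument and propagates it through the iteration with Lemma~\ref{constr2}; and it observes that every cycle of $H_0$ contains at most $22$ white vertices, which Lemma~\ref{cyc} converts into the bound $c(n)$ for every $H_n$. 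Until you produce and verify such a gadget, your text describes the proof schema but assumes its essential input. (A minor slip besides: the earlier bound $\log_9 8$ is Theorem~\ref{89} of B{\"o}hme et al.; Lemma~\ref{3} is only used to note that their graphs are $3$-trees.)
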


In Section~\ref{s8}, we extend the used construction, and we remark that 
there are $k$-trees of toughness greater than $1$
whose longest paths are relatively short for every $k \geq 4$.
(Meanwhile, $3$-trees of toughness greater than $1$ are Hamilton-connected by Theorem~\ref{main}.)
This remark slightly improves the lower bound on toughness of non-Hamiltonian $k$-trees
presented in~\cite{ktree}, and contradicts the suggestion of~\cite{SW}.


\section{Tough enough \emph{k}-trees are Hamilton-connected}
\label{s3}
In this section, we prove Theorem~\ref{main}. 
Simply spoken, the proof is inductive;
we choose a vertex on a path and we extend the path using particular neighbours of this vertex.
 
For a vertex $v$, we let $N(v)$ denote its \emph{neighbourhood},
that is, the set of all vertices adjacent to $v$. 
We say a set $S \subseteq N(v)$ is a \emph{squeeze} by $v$ if
the following properties are satisfied for $S$ and $R = N(v) \sm S$.
\begin{itemize}
\item
$2 \geq |S| \geq 1$ and $|R| \geq 2$.
\item
Every vertex of $S$ is adjacent to at least $|R| - 1$ vertices of $R$,
and every vertex of $R$ is adjacent to at least $|S| - 1$ vertices of $S$.
\end{itemize}

The basic ingredient for applying the induction is the following:

\begin{lemma}\label{path}
Let $P$ be some set of vertices of a graph $G$
and let $x_1$, $x_2$ and $v$ be distinct vertices of $P$
and let $S$ be a squeeze by $v$.
If $G - S$ has a path between $x_1$ and $x_2$ whose vertex set is $P$,
then $G$ has such path whose vertex set is $P \cup S$.
\end{lemma}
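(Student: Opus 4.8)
The plan is to locate $v$ on the given path and reroute the short segment around $v$ so that it absorbs the one or two vertices of $S$. First I would observe that since the path lies in $G - S$ and has vertex set $P$, we have $P \cap S = \emptyset$; as $v \in P$ is distinct from the endpoints $x_1, x_2$, it is an internal vertex of the path, so it has exactly two neighbours $a$ and $b$ on the path. These satisfy $a, b \in N(v) \cap P$, and since $P$ is disjoint from $S$, both $a$ and $b$ lie in $R = N(v) \setminus S$; moreover $a \neq b$. The goal then reduces to replacing the subpath $a\,v\,b$ by a path from $a$ to $b$ through $v$ that also visits every vertex of $S$, using only the vertices $\{a, v, b\} \cup S$; splicing this in yields a path between $x_1$ and $x_2$ with vertex set $P \cup S$, and no vertex is repeated because $S$ is disjoint from $P$.

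Next I would extract the two adjacency facts supplied by the squeeze. Since each vertex $s \in S$ is adjacent to at least $|R| - 1$ vertices of $R$, it fails to be adjacent to at most one vertex of $R$; as $a$ and $b$ are two distinct vertices of $R$, every $s \in S$ is adjacent to at least one of $a, b$. Symmetrically, each of $a, b$ is adjacent to at least $|S| - 1$ vertices of $S$. For $|S| = 1$, writing $S = \{s\}$, the first fact gives $s$ adjacent to $a$ or to $b$; in the former case I reroute as $a\,s\,v\,b$ and in the latter as $a\,v\,s\,b$, both valid since $sv$, $av$, $bv$ are edges.

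For $|S| = 2$, writing $S = \{s_1, s_2\}$, I would seek a rerouting of the form $a\,s_i\,v\,s_j\,b$ with $\{i,j\} = \{1,2\}$; this exists precisely when the auxiliary bipartite graph between $\{s_1, s_2\}$ and $\{a, b\}$, whose edges record adjacency in $G$, has a perfect matching. The first fact says each $s_i$ has degree at least one there, and the second fact (with $|S| - 1 = 1$) says each of $a, b$ has degree at least one there. A perfect matching then follows, since the only obstruction in a balanced bipartite graph on $2 + 2$ vertices with all degrees positive would be both $s_1$ and $s_2$ having their sole neighbour equal to the same endpoint, which would leave the other endpoint isolated and contradict its positive degree. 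Matching $s_i$ to $a$ and $s_j$ to $b$ then gives the segment $a\,s_i\,v\,s_j\,b$, whose edges $a s_i$, $s_i v$, $v s_j$, $s_j b$ are all present.

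The only genuinely delicate point is the $|S| = 2$ case: it is exactly there that both halves of the squeeze definition are needed, namely the bound on $S$-to-$R$ adjacencies to place each $s_i$ next to an endpoint, and the bound on $R$-to-$S$ adjacencies to forbid both $s_i$ from being forced to the same side. Everything else is routine bookkeeping, so I expect the final write-up to be short.
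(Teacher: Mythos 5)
Your proposal is correct and follows essentially the same route as the paper: the paper's proof also locates the path-neighbours $u,w$ of $v$ and observes that the graph induced by $\{u,v,w\} \cup S$ has a Hamilton path between $u$ and $w$, then splices it in. Your case analysis on $|S|$ and the small matching argument simply make explicit the observation that the paper leaves to the reader.
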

\begin{proof}
We let $uv$ and $vw$ be the edges (incident with $v$) of the considered path in $G - S$.
We note that the graph induced by $\{u,v,w\} \cup S$ has a Hamilton path between $u$ and $w$.
Thus, we can extend the considered path into a path between $x_1$ and $x_2$
whose vertex set is $P \cup S$. 
\end{proof}

We recall that a vertex whose neighbourhood induces a complete graph is
called \emph{simplicial}.
For further reference, we state the following fact (shown, for instance, in~\cite{update}).

\begin{prop}\label{simpl}
Adding a simplicial vertex to a graph does not increase its toughness.
\end{prop}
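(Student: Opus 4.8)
The plan is to witness the inequality $\tau(G') \le \tau(G)$ by transporting an optimal cutset of $G$ directly to $G'$. Write $G'$ for the graph obtained from $G$ by adding the simplicial vertex $v$, so that $G' - v = G$ and $N_{G'}(v)$ induces a complete graph, and let $c(H)$ denote the number of components of a graph $H$. If $G$ is complete then $\tau(G)$ is infinite and there is nothing to prove, so I assume $G$ is not complete and fix a separating set $X \subseteq V(G)$ realizing the toughness, that is, with $c(G - X) \ge 2$ and $|X| / c(G - X) = \tau(G)$.

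The one structural input I would use is that $N_{G'}(v) \sm X$ is a clique and hence, when nonempty, is contained in a single component of $G' - X$. From this it follows that passing from $G - X$ to $G' - X$ (inserting $v$ together with its incident edges) can neither merge two components nor destroy any: if $N_{G'}(v) \sm X$ is nonempty then $v$ is absorbed into the unique component meeting it, so $c(G' - X) = c(G - X)$; and if $N_{G'}(v) \subseteq X$ then $v$ becomes isolated, so $c(G' - X) = c(G - X) + 1$. In particular $c(G' - X) \ge c(G - X) \ge 2$, so $X$ is again a separating set of $G'$.

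Concretely, first I would set up this notation and clear away the complete-graph case; next I would prove the clique-to-one-component claim (any two surviving neighbours of $v$ are adjacent in $G'$, hence adjacent in $G' - X$, hence in a common component); then I would split on whether $N_{G'}(v) \subseteq X$ and read off the two component counts above; and finally I would conclude through the chain $\tau(G') \le |X| / c(G' - X) \le |X| / c(G - X) = \tau(G)$, using $X$ itself as the witnessing cutset for $G'$.

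The hard part is conceptually slight and amounts to bookkeeping at the boundary: one must be sure that inserting $v$ never raises the toughness, which is exactly the content of the claim that a simplicial vertex attaches to at most one component of $G' - X$. The only way that adding a vertex could increase toughness would be to bridge two components of $G - X$ into one (lowering the denominator while leaving the numerator fixed), and the simpliciality of $v$ forbids precisely this; so once that claim is in hand the proof closes immediately.
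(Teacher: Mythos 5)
Your proof is correct and complete: fixing an optimal separating set $X$ of a non-complete $G$, the clique condition on $N_{G'}(v)$ guarantees that $v$ attaches to at most one component of $G - X$, giving $c(G' - X) \ge c(G - X) \ge 2$ and hence $\tau(G') \le |X|/c(G' - X) \le |X|/c(G - X) = \tau(G)$, with the boundary cases (complete $G$, and $N_{G'}(v) \subseteq X$ making $v$ isolated) handled properly. Note that the paper itself offers no proof of Proposition~\ref{simpl} --- it defers to~\cite{update} --- so there is nothing internal to compare against; your transported-cutset argument is the standard one for this fact.
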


By definition, $k$-trees can be viewed as graphs constructed
iteratively from $K_k$ by adding one new simplicial vertex of degree $k$ in each step.
We recall that a vertex adjacent to all vertices of a graph is called \emph{universal}.
Considering a non-universal vertex $v$ of a $k$-tree and the set $S$ of all its neighbours of degree $k$,
we say $v$ is a \emph{twig} if $N(v) \sm S$ induces $K_k$ and $|S| \geq 1$;
and we say $S$ is the \emph{bud} of this twig.
We note the following two facts:

\begin{lemma}\label{newTwig}
Let $k \geq 1$ and let $G$ and $G^+$ be $k$-trees
such that $G$ is obtained from $G^+$ by removing a simplicial vertex.
If $t$ is a twig in $G$ but not in $G^+$,
then a vertex of the bud of $t$ is a twig in $G^+$.
\end{lemma}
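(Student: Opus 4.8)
The plan is to track how the twig data of $t$ changes when the single simplicial vertex is added. Write $G^+ = G + z$, where $z$ is simplicial in $G^+$ and $C = N_{G^+}(z)$ is the $k$-clique to which it is attached; let $S$ be the bud of $t$ in $G$ and $R = N_G(t) \setminus S$ the accompanying $k$-clique, so that $|R| = k$. First I would record two elementary facts: adding $z$ raises exactly the degrees of the vertices of $C$ (by one each) and leaves every other degree unchanged; and a degree-$k$ vertex of a $k$-tree is simplicial, with its neighbourhood inducing $K_k$. I would also note that non-universality of $t$ is automatically inherited by $G^+$. Then I would follow each neighbour of $t$: a non-bud neighbour (degree $> k$) stays non-bud in $G^+$; a bud vertex (degree $k$) leaves the bud precisely when it lies in $C$, since its degree then rises to $k+1$; and $z$ becomes a new degree-$k$ neighbour of $t$ exactly when $t \in C$.

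From this bookkeeping, the set of non-bud neighbours of $t$ in $G^+$ is $R \cup (S \cap C)$, of size $k + |S \cap C|$. Hence, if $S \cap C = \emptyset$, then this set is still $R$ and induces $K_k$, while the bud of $t$ in $G^+$ remains nonempty, so $t$ is again a twig. Contrapositively, since $t$ is assumed to stop being a twig in $G^+$, the bud must meet $C$, and I may fix a vertex $u \in S \cap C$. This $u$, a bud vertex of $t$, will be the promised twig of $G^+$.

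To finish I would verify that $u$ is a twig in $G^+$. Being a degree-$k$ vertex of $G$, the vertex $u$ is simplicial, so $N_G(u)$ induces $K_k$; and since $u \in C$, the clique $C \setminus \{u\}$ lies inside $N_G(u)$, leaving a single further neighbour $p \notin C$, so that $N_G(u) = (C \setminus \{u\}) \cup \{p\}$. In $G^+$ the vertex $u$ gains $z$, giving neighbourhood $(C \setminus \{u\}) \cup \{p, z\}$ of size $k+1$. The vertices of $C \setminus \{u\}$ all have had their degrees raised above $k$, and $z$ has degree $k$; so, provided $\deg_G(p) > k$, the bud of $u$ in $G^+$ is exactly $\{z\}$ and the remaining neighbours form $N_G(u)$, a $K_k$. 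This exhibits $u$ as a twig of $G^+$ with $z$ in its bud, as required.

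I expect the one genuine obstacle to be excluding $\deg_G(p) = k$. The key observation is that $u$ and $p$ are adjacent, and two adjacent simplicial degree-$k$ vertices cannot coexist in a $k$-tree: if $p$ also had degree $k$, then $p$ would be simplicial with $N_G(p) = C$ (it is adjacent to all of $C \setminus \{u\}$ and to $u$), so whichever of $u$ and $p$ was added later in the construction of $G$ would have been attached to a $k$-clique containing the other, forcing that other vertex to have degree at least $k+1$ and contradicting that both have degree $k$. Hence $\deg_G(p) > k$, and the argument goes through.
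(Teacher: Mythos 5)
Your proof follows the same route as the paper's: you show that if $t$ ceases to be a twig then some bud vertex $u$ of $t$ must lie in $C=N_{G^+}(z)$, and you then verify that this $u$ is a twig of $G^+$ with bud $\{z\}$. The bookkeeping in the first half is correct, and you correctly isolated the crux, namely ruling out $\deg_G(p)=k$ for the unique neighbour $p$ of $u$ outside $C$. The justification you give for that step, however, has a genuine gap. The auxiliary claim you rely on --- that two adjacent simplicial degree-$k$ vertices cannot coexist in a $k$-tree --- is false as stated: in $K_{k+1}$ (which is a $k$-tree) every vertex is simplicial of degree $k$, and all of them are pairwise adjacent. So your argument for the claim cannot be sound as written, and indeed it breaks in exactly two unhandled situations. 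If the later-added of $u$ and $p$ is attached at a moment when the graph built so far is just $K_k$, then the other vertex has degree $k-1$ at that moment, so its degree rises only to $k$, not to $k+1$, and no contradiction arises; and if $u$ and $p$ both belong to the initial $K_k$ of the construction order, then neither of them ``was added later'' onto a clique containing the other, so the argument does not apply at all. Any correct proof must somewhere use that $G$ has more than $k+1$ vertices, and yours never does.

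The repair is easy with hypotheses you already have: since $G$ contains the twig $t$, which is non-universal of degree at least $k+1$, $G$ has at least $k+3$ vertices. One option: in both degenerate cases above, the construction reaches the clique $K_{k+1}$ on $C\cup\{p\}$ with $u$ and $p$ already carrying all of their final edges; since $G$ is larger, a further vertex is added next, and it attaches to a $k$-clique of this $K_{k+1}$, which necessarily contains $u$ or $p$, forcing one of them to have final degree at least $k+1$ --- a contradiction. Cleaner still, avoid construction orders altogether: if $\deg_G(p)=k$, then $N_G(p)=C$ and $N_G(u)=(C\setminus\{u\})\cup\{p\}$, so deleting the $k-1$ vertices of $C\setminus\{u\}$ separates $\{u,p\}$ from the rest of $G$, which is nonempty; this contradicts the $k$-connectivity of $k$-trees distinct from $K_k$ (Lemma~\ref{3}). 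For what it is worth, the paper's own proof simply extracts the same vertex $t'\in S\cap C$ and asserts that it is ``clearly'' a twig of $G^+$; your write-up makes explicit precisely the point that this assertion hides, which is commendable, but it needs the above patch to be complete.
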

\begin{proof}
Since $t$ is a twig in $G$ but not in $G^+$,
there exists a vertex $t'$ adjacent to $t$ 
such that $t'$ has degree $k$ in $G$, and degree $k+1$ in $G^+$.
Clearly, $t'$ is a twig in~$G^+$.
\end{proof}

\begin{lemma}\label{twig}
Let $k \geq 1$ and let $G$ be a $k$-tree (on at least $k+3$ vertices) of toughness greater than $\frac{k}{3}$.
Then $G$ has a twig.
Furthermore, if $v$ is a twig of $G$ and $S$ is its bud,
then $G-S$ is a $k$-tree of toughness greater than $\frac{k}{3}$.
In addition, if $k \geq 2$, then $S$ is a squeeze by $v$.
\end{lemma}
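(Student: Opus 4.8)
The plan is to treat the three assertions separately, in the order stated, leaning on Proposition~\ref{simpl} and Lemma~\ref{newTwig}. Throughout I will use the standard fact that in a $k$-tree on at least $k+1$ vertices the simplicial vertices are precisely the vertices of degree $k$ (a degree-$k$ vertex cannot meet two non-adjacent vertices, since the maximum clique has size $k+1$ and the minimum degree is $k$); thus the bud $S$ of a twig is exactly its set of simplicial neighbours. I will also record that in such a $k$-tree other than $K_{k+1}$ no two simplicial vertices are adjacent: two adjacent simplicial vertices would have equal closed neighbourhoods forming a maximal clique that could not be joined to the rest of the graph.

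For the existence of a twig I would induct on $n = |V(G)|$. For the inductive step ($n \ge k+4$), delete a simplicial vertex to obtain a $k$-tree $G^-$ on $n-1 \ge k+3$ vertices; by Proposition~\ref{simpl} its toughness is still greater than $\frac{k}{3}$, so the induction hypothesis gives a twig $t$ of $G^-$, and then either $t$ is already a twig of $G$ or Lemma~\ref{newTwig} promotes a vertex of its bud to a twig of $G$. The base case $n = k+3$ I would do by hand: the unique $k$-tree on $k+2$ vertices is $K_{k+2}$ minus an edge (an edge count forces this), and adding one more simplicial vertex either produces the ``star'' in which three simplicial vertices share a common $K_k$ — which has toughness exactly $\frac{k}{3}$ and is therefore excluded — or else attaches the new vertex so that one of the two simplicial vertices of $G^-$ becomes a twig of $G$.

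For $G-S$ I would use that $S$ is an independent set of simplicial vertices (by the remark above), so these vertices can be deleted one after another, each remaining simplicial, leaving a $k$-tree; reinstating them one at a time and applying Proposition~\ref{simpl} repeatedly shows that the toughness of $G-S$ is at least that of $G$, hence stays above $\frac{k}{3}$.

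The squeeze assertion is where the real work lies, and I expect it to be the main obstacle. The only non-trivial points are $|S| \le 2$ and, when $|S| = 2$, that every vertex of $R$ sees a vertex of $S$. The starting observation is that each $s \in S$ has $N(s) = \{v\} \cup (R \setminus \{r_s\})$ for a unique ``missed'' vertex $r_s \in R$ (its neighbours other than $v$ lie in $N(v)$ and cannot be simplicial, hence lie in $R$, and there are $k-1$ of them). I would then play two cuts against the toughness: deleting $\{v\} \cup (R \setminus \{r\})$ (of size $k$) isolates every $s$ with $r_s = r$ and leaves at least one further component, while deleting $\{v\} \cup R$ (of size $k+1$) isolates all of $S$ and, since $v$ is non-universal, leaves a non-empty remainder. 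If some vertex of $R$ were missed by two members of $S$, the first cut would give three components and toughness $\le \frac{k}{3}$; this both forces $|S| \le 2$ when $k=2$ (by pigeonhole, as $|R| = 2$) and, for $|S| = 2$, forces the two missed vertices to be distinct so that every $r \in R$ is adjacent to at least one of them. For $k \ge 3$ the case $|S| \ge 3$ is killed by the second cut, which yields at least four components and toughness $\le \frac{k+1}{4} \le \frac{k}{3}$. Assembling these, together with $|R| = k \ge 2$ and the fact that each $s$ meets exactly $|R|-1$ vertices of $R$, verifies all the squeeze conditions.
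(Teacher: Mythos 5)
Your proposal is correct and follows essentially the same route as the paper's proof: existence of a twig by induction along the $k$-tree construction via Lemma~\ref{newTwig} with the $(k+3)$-vertex case settled by hand, the toughness of $G-S$ by repeated application of Proposition~\ref{simpl}, and the squeeze property via the same two cuts (the common neighbourhood $\{v\} \cup (R \sm \{r\})$ and the set $\{v\} \cup R$) that force the missed vertices to be distinct and $|S| \leq 2$. The only cosmetic differences are that your induction runs top-down rather than along the construction, and that you state explicitly the standard facts about $k$-trees (degree-$k$ vertices are simplicial and pairwise non-adjacent) which the paper uses implicitly in its ``we note'' steps.
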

\begin{proof}
We consider an iterative construction of $G$, and 
we let $T$ denote the $k$-tree on $k+3$ vertices 
which is obtained in the corresponding iteration of the construction.
Proposition~\ref{simpl} implies that the toughness of $T$ is at least the toughness of $G$,
and we observe that there exists only one $k$-tree on $k+3$ vertices 
of toughness greater than $\frac{k}{3}$ (for a fixed $k$).
We note that $T$ has a twig.
Thus, Lemma~\ref{newTwig} implies that $G$ has a twig.

We consider a twig $v$ in $G$ and its bud $S$, and we let $R = N(v) \sm S$.
Clearly, $G-S$ is a $k$-tree.
Furthermore, the toughness of $G-S$ is at least the toughness of $G$ (by Proposition~\ref{simpl}).

In addition, we note that every vertex of $S$ is adjacent to precisely $|R| - 1$ vertices of $R$.
Since $v$ is non-universal, the toughness of $G$ implies that no two vertices of
$S$ have the same neighbourhood. In particular, for $k = 2$, we have $|S| \leq 2$.
For $k \geq 3$, the same follows from the fact that $G - R - v$ has at least $|S| + 1$ components and $|R| = k$.
Clearly, if $k \geq 2$ then $|R| \geq 2$; and we conclude that $S$ is a squeeze by~$v$.
\end{proof}

We note that, with Lemmas~\ref{path} and~\ref{twig} on hand,
we can easily show Hamiltonicity of $k$-trees of toughness greater than~$\frac{k}{3}$.
(We remark that $2$-trees of toughness greater than $\frac{2}{3}$ are, in fact, $1$-tough.)

\begin{lemma}\label{ham}
Let $k \geq 2$. Every $k$-tree (except for $K_2$) of toughness
greater than $\frac{k}{3}$ is Hamiltonian.
\end{lemma}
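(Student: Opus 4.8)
The plan is to prove the statement by induction on $|V(G)|$, using Lemma~\ref{twig} to peel off the bud of a twig and Lemma~\ref{path} to reinsert that bud into a Hamilton cycle of the resulting smaller $k$-tree.

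For the base case I would handle all $k$-trees on at most $k+2$ vertices directly, since these are exactly the ones to which Lemma~\ref{twig} does not apply. Up to isomorphism such a graph is either $K_k$ (Hamiltonian for $k \geq 3$, while for $k=2$ it is the excluded $K_2$), the complete graph $K_{k+1}$, or the unique $k$-tree on $k+2$ vertices, namely $K_{k+2}$ minus a single edge; the latter is readily seen to be Hamiltonian for every $k \geq 2$. No toughness assumption is needed at this stage, which is convenient because the graphs reached at the bottom of the induction need only be known to be Hamiltonian.

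For the inductive step, suppose $|V(G)| \geq k+3$. By Lemma~\ref{twig}, $G$ has a twig $v$ whose bud $S$ is a squeeze by $v$, and $G-S$ is again a $k$-tree of toughness greater than $\frac{k}{3}$; moreover $|V(G-S)| \geq k+1 \geq 3$, so $G-S \neq K_2$ and the induction hypothesis yields a Hamilton cycle $C$ of $G-S$. Since $v$ lies on $C$ and $C$ has at least three vertices, I would choose an edge $x_1x_2$ of $C$ not incident with $v$; deleting it turns $C$ into a Hamilton path of $G-S$ between the distinct vertices $x_1,x_2$ on which $v$ is an interior vertex. Applying Lemma~\ref{path} to this path (whose vertex set is $V(G-S)$) produces a path between $x_1$ and $x_2$ in $G$ spanning $V(G-S)\cup S = V(G)$, and since $x_1x_2 \in E(G-S) \subseteq E(G)$, adding this edge back closes the path into a Hamilton cycle of $G$.

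The only delicate point is the passage from cycles to paths required to invoke Lemma~\ref{path}, which is stated for paths rather than cycles: I must make sure the chosen edge $x_1x_2$ avoids $v$, so that $v$ stays an interior vertex of the resulting path and thus has the two cycle-neighbours $u,w$ that Lemma~\ref{path} reroutes through $S$, and that the endpoints remain distinct from $v$. Both follow from $|V(G-S)| \geq 3$. Everything else is routine, and the squeeze property guaranteed by Lemma~\ref{twig} is precisely the hypothesis that Lemma~\ref{path} consumes.
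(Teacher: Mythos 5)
Your proposal is correct and follows essentially the same route as the paper: induction on the number of vertices, using Lemma~\ref{twig} to remove a bud and Lemma~\ref{path} to reinsert it into the Hamilton cycle of the smaller $k$-tree, viewed as a Hamilton path with the twig $v$ as an interior vertex. Your treatment is slightly more explicit about the base cases and about choosing the deleted cycle edge $x_1x_2$ away from $v$, but these are details the paper's proof leaves implicit rather than a genuinely different argument.
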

\begin{proof}
We let $G$ be the considered $k$-tree,
and we let $n$ denote the number of its vertices.
Clearly, if $n \leq k + 2$, then $G$ is Hamiltonian.
We can assume that $n \geq k + 3$.
We suppose that the statement is satisfied for graphs on at most $n-1$ vertices,
and we show it for $G$.

By Lemma~\ref{twig},
$G$ has a twig $v$; and we let $S$ be the bud of $v$.
Furthermore, $G-S$ is a $k$-tree of toughness greater than~$\frac{k}{3}$.
(Clearly, $G-S$ is distinct from $K_2$.)
By the hypothesis, $G-S$ has a Hamilton cycle, and we view it
as a Hamilton path containing $v$ as an interior vertex.
By Lemmas~\ref{path} and~\ref{twig}, we can prolong this path
and obtain a Hamilton path in $G$ whose ends are adjacent, that is, a Hamilton cycle.
\end{proof}

Aiming for the Hamilton-connectedness, we shall need two additional ingredients
which are given by Lemma~\ref{twoGood} and Proposition~\ref{stave}.
For $k \geq 2$, a \emph{basic $3$-twig} is the graph obtained from $K_{k+1}$
by choosing its three different subgraphs $K_k$ and by adding one new simplicial vertex to each of them.
(For instance, the basic $3$-twig for $k = 3$ is the graph $B$ depicted in Figure~\ref{f:79}.)

\begin{lemma}\label{twoGood}
Let $k \geq 1$ and let $G$ be a $k$-tree (on at least $k+4$ vertices)
of toughness greater than $\frac{k}{3}$.
If $G$ is distinct from the basic $3$-twig,
then $G$ has two non-adjacent twigs (whose buds are disjoint).
\end{lemma}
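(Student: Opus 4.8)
The plan is to prove this by the same inductive machinery used for Lemma~\ref{twig}, now tracking \emph{two} disjoint twigs through the iterative construction. The base of the induction should be the smallest relevant $k$-tree: a $k$-tree on exactly $k+4$ vertices of toughness greater than $\frac{k}{3}$. First I would enumerate these graphs (as in the proof of Lemma~\ref{twig}, the toughness condition severely restricts them), and check directly that each one either is the basic $3$-twig or already contains two non-adjacent twigs with disjoint buds. The point of excluding the basic $3$-twig is exactly that it is the one small exceptional graph where the three twigs are pairwise adjacent (all three new simplicial vertices attach to copies of $K_k$ inside a single $K_{k+1}$), so no two of them are non-adjacent in the required sense.

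For the inductive step, let $G$ be a $k$-tree on $n \geq k+5$ vertices of toughness greater than $\frac{k}{3}$, and let $v$ be a simplicial vertex of degree $k$ whose removal yields $G^+ = G - v$. By Proposition~\ref{simpl}, $G^+$ is again a $k$-tree of toughness greater than $\frac{k}{3}$, and it has at least $k+4$ vertices, so the induction hypothesis applies to it (provided $G^+$ is not the basic $3$-twig, which I would handle as a separate small case by direct inspection). Thus $G^+$ has two non-adjacent twigs $s$ and $t$ with disjoint buds. The main task is to lift this pair back to~$G$. The natural argument is to show that adding the single simplicial vertex $v$ can destroy the twig property of at most one of $s$, $t$, and that when it does destroy one, Lemma~\ref{newTwig} supplies a replacement twig inside the bud of the affected vertex; I would then argue that the replacement can be chosen non-adjacent to the surviving twig and with a bud disjoint from it.

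The hard part will be controlling the adjacency and bud-disjointness simultaneously after this replacement. Adding $v$ can change the status of $s$ or $t$ only by raising the degree of one of their bud vertices from $k$ to $k+1$ (this is the mechanism in Lemma~\ref{newTwig}); since $v$ has only $k$ neighbours and the two buds are disjoint, $v$ cannot simultaneously sit in the ``critical'' position for both twigs unless the local geometry forces it, which is where the remaining small cases must be ruled out using the toughness bound. Concretely, I would argue that $N(v)$ induces $K_k$, so $v$ together with its neighbourhood lies in one $K_{k+1}$; the twig that gets affected is the one whose bud meets this clique, and the replacement twig $t'$ furnished by Lemma~\ref{newTwig} lies in that same bud. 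I would then verify, using $|N(v)| = k$ and the toughness-forced condition that no two bud vertices share a neighbourhood (established in the proof of Lemma~\ref{twig}), that $t'$ and the unaffected twig are non-adjacent with disjoint buds. I expect the genuinely delicate point to be a handful of degenerate configurations where $v$'s clique interacts with \emph{both} twigs at once; these must be dispatched by showing they would force a separating set violating toughness greater than $\frac{k}{3}$, or else coincide with the excluded basic $3$-twig.
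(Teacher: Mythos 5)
Your proposal takes essentially the same route as the paper's proof: induction along the iterative construction of the $k$-tree, a base-case inspection at $k+4$ vertices (where the basic $3$-twig is the unique exception) and at $k+5$ vertices (your ``separate small case''), followed by propagation of the two non-adjacent twigs using Lemma~\ref{newTwig}. One remark: the ``delicate'' configurations you propose to dispatch via toughness in fact need no toughness argument at all, since buds of non-adjacent twigs are disjoint and carry no edges between them (degree-$k$ vertices are simplicial), so the clique $N(v)$ can meet at most one of the two twig--bud systems, and any replacement twig given by Lemma~\ref{newTwig} lies inside the old bud and is therefore automatically non-adjacent to the surviving twig with disjoint bud.
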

\begin{proof}
We consider an iterative construction of $G$,
and we note that all $k$-trees obtained during the construction have
toughness greater than $\frac{k}{3}$ (by Proposition~\ref{simpl}).
We consider the $k$-tree on $k+4$ vertices, and 
we observe that either it is the basic $3$-twig
or it has two non-adjacent twigs.
(Clearly, the buds of non-adjacent twigs are disjoint.)
In particular, we can assume that $G$ has more than $k+4$ vertices.

Consequently, we note that the $k$-tree on $k+5$ vertices obtained during the construction
has two non-adjacent twigs.
Using Lemma~\ref{newTwig},
we conclude that $G$ has two non-adjacent twigs.
\end{proof}

In a graph $G$, we say a \emph{$\Theta$-spanner} between vertices $x_1$ and $x_2$
is a spanning subgraph of $G$ consisting of three paths with the same ends $x_1$, $x_2$
such that (except for the ends) these paths are mutually disjoint,
and each of them has at least one interior vertex.
We shall use $\Theta$-spanners to address the setting in which
the ends of the desired Hamilton path are the only twigs of a $k$-tree.
(We note that a similar idea appeared in~\cite{all}.)

\begin{prop}\label{stave}
Let $k \geq 3$ and let $G$ be a $k$-tree (distinct from $K_4$)
of toughness greater than $\frac{k}{3}$ 
and let $x_1$ and $x_2$ be distinct vertices of degree $k$. 
Then $G$ has a $\Theta$-spanner between $x_1$ and $x_2$.
\end{prop}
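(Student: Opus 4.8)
The plan is to prove the statement by induction on $n = |V(G)|$, keeping $x_1$ and $x_2$ fixed as the two poles throughout. A convenient opening observation is that a vertex of degree $k$ is \emph{never} a twig: its neighbourhood has exactly $k$ vertices and already induces $K_k$, so deleting any nonempty set of degree-$k$ neighbours would leave fewer than $k$ vertices, which cannot induce $K_k$. Hence $x_1$ and $x_2$ are not twigs, and every twig produced below automatically differs from both poles. For the base cases I would check the smallest admissible $k$-trees directly: for $k \geq 4$ already $K_{k+1}$ admits a $\Theta$-spanner, since its $k-1 \geq 3$ non-pole vertices split into three nonempty paths joining the poles, while for $k = 3$ a handful of small $3$-trees on at least five vertices must be verified ($K_4$ being genuinely excluded, as a $\Theta$-spanner needs at least five vertices).

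The inductive engine is Lemma~\ref{path}. Suppose $G$ has a twig $v$ whose bud $S$ avoids $\{x_1,x_2\}$. By Lemma~\ref{twig}, $G - S$ is then a $k$-tree of toughness greater than $\frac{k}{3}$ and $S$ is a squeeze by $v$; moreover $x_1$ and $x_2$ keep degree $k$ in $G - S$, since any neighbour of $x_i$ lying in $S$ would, being simplicial, force $x_i$ into $N(v)$ and hence into $S$, contrary to $x_i \notin S$. By the induction hypothesis $G - S$ has a $\Theta$-spanner between $x_1$ and $x_2$; as $v \neq x_1,x_2$, it is interior to one of the three paths, and Lemma~\ref{path} reinserts $S$ into that path, producing a $\Theta$-spanner of $G$. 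Lemma~\ref{twoGood} supplies the twigs (indeed two non-adjacent ones with disjoint buds) whenever $G$ is neither small nor the basic $3$-twig, so this good case settles everything as long as \emph{some} such bud misses both poles.

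The hard part is the complementary case, in which \emph{every} twig is adjacent to $x_1$ or to $x_2$; then every bud meets $\{x_1,x_2\}$, and naive peeling would delete a pole. This is precisely the path-like situation (for instance a triangulated strip whose only simplicial vertices are its two ends), and it is exactly what the $\Theta$-spanner is meant to handle. Here I would peel at a pole rather than in the interior: choose a twig $v$ adjacent to $x_1$, so $x_1$ lies in its bud $S$. Since $x_1$ is simplicial, $G - x_1$ is a $k$-tree of toughness greater than $\frac{k}{3}$ by Proposition~\ref{simpl}, and in it one vertex of $S \setminus \{x_1\}$ has become a degree-$k$ vertex: namely $v$ itself when $S = \{x_1\}$, and otherwise the second bud vertex $y$ (one checks $x_1 \not\sim y$, so $y$ retains degree $k$). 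Recursing on $G - x_1$ with this new pole together with $x_2$ yields three internally disjoint paths covering $V(G) \setminus \{x_1\}$, and I then reattach $x_1$ by a bounded local rerouting through $N(x_1)$.

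The reattachment exploits $|N(x_1)| = k \geq 3$ to supply the three distinct edges $x_1$ needs: one path is extended as $x_1, v, \dots$ (absorbing the former pole), while the other two are rerouted to begin $x_1, r, \dots$ through two further neighbours of $x_1$ in the clique $N(x_1)$. Making this surgery precise — verifying that the three rerouted paths remain internally disjoint, cover every vertex, and have the correct pole-adjacencies for both possible bud sizes — is the main technical obstacle, and it is where the hypotheses $k \geq 3$ and toughness greater than $\frac{k}{3}$ are genuinely used.
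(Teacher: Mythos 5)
Your skeleton is largely the paper's own: the observation that degree-$k$ vertices are never twigs, induction on the number of vertices with small $k$-trees checked directly, and your ``good'' case (a twig whose bud avoids both poles; peel the bud via Lemma~\ref{twig}, recurse, reinsert via Lemma~\ref{path}, with Lemma~\ref{twoGood} supplying the twigs) is exactly the paper's first case. The genuine gap is in the hard case, which is the crux of the proposition, and you explicitly defer it: the reattachment of $x_1$ is left as ``the main technical obstacle,'' and the sketch you do give fails as stated when the bud has two vertices. If $S = \{x_1, y\}$, your new pole is $y$, and $y$ is \emph{not} adjacent to $x_1$ --- that non-adjacency is the very reason $y$ keeps degree $k$ --- so no path of the spanner of $G - x_1$ can be ``extended as $x_1, v, \dots$ (absorbing the former pole)'': the former pole $y$ has degree three in that spanner and must simultaneously be demoted to an interior vertex while $x_1$ acquires three spanner edges, and $v$ may lie in the interior of any of the three paths, far from $y$. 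Completing this needs structure you never state --- each of $x_1$ and $y$ is adjacent to exactly $k-1$ vertices of $R = N(v) \setminus S$, and (by the no-common-neighbourhood toughness argument inside the proof of Lemma~\ref{twig}) they miss \emph{different} vertices of $R$ --- plus a case split on whether $v$ is a spanner-neighbour of $y$ or not; the surgery can in fact be pushed through, but none of it is in your proposal, and even your $|S|=1$ sketch tacitly uses the ``exactly $k-1$ neighbours in $R$'' fact rather than merely $|N(x_1)| = k$. Two smaller holes in the same case: you must choose the twig with $x_2 \notin S$ (otherwise for $|S| = 2$ your new pole is $x_2$ itself), which requires the disjoint-buds conclusion of Lemma~\ref{twoGood}; and that lemma excludes the basic $3$-twig, so that graph needs separate inspection here.

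For contrast, the paper's surgery avoids the demotion problem entirely by recursing on $G - S'$ rather than on $G - x_1$: the twig $x_1'$ itself becomes the new pole (it has degree $k$ in $G - S'$). In the resulting spanner $Y$ one deletes $x_1'$, leaving three stubs ending at the three $Y$-neighbours $N$ of $x_1'$; two of the stubs are reconnected through the path on $(N \cup \{x_1'\} \setminus \{y\}) \cup S'$ supplied by Lemmas~\ref{path} and~\ref{twig} --- a path containing $x_1$ as an interior vertex, so cutting it at $x_1$ yields two of the three desired paths --- and the third stub is joined to $x_1$ by an edge $x_1 y$ with $y \in N$, which exists because $x_1$ is adjacent to all but one vertex of $N(x_1') \setminus S'$ and hence to at least two vertices of $N$. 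If you want to keep your peel-one-vertex recursion, you must write out the $|S| = 2$ reattachment in full; as it stands, the proposal proves the routine half of the proposition and only gestures at the half that makes it nontrivial.
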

\begin{proof}
Clearly, $K_k$ has no vertex of degree $k$.
Furthermore, there exists only one $k$-tree
on $k+1$ vertices and one on $k+2$ vertices,
and only one $k$-tree on $k+3$ vertices has the required toughness
(for a fixed $k$).

Considering these $k$-trees, we note that the statement is satisfied for graphs on at most $k + 3$ vertices.
We let $n$ denote the number of vertices of $G$, and we assume that $n \geq k + 4$.
We suppose that the statement is satisfied for graphs on at most $n-1$ vertices,
and we show it for $G$.

Let us suppose that there is a twig $v$ and its bud $S$
such that neither $x_1$ nor $x_2$ belongs to $S$.
By Lemma~\ref{twig} and by the hypothesis,
we can consider a $\Theta$-spanner between $x_1$ and $x_2$ in $G - S$;
and we let $P$ be the set of vertices of one of the three paths
between $x_1$ and $x_2$ of this $\Theta$-spanner such that $v$ belongs to $P$.
By Lemmas~\ref{path} and~\ref{twig}, there is a path with the same ends whose vertex set is $P \cup S$.
Thus, $G$ has a $\Theta$-spanner between $x_1$ and $x_2$.

We assume that every twig is adjacent to $x_1$ or $x_2$.
By Lemma~\ref{twoGood}, we can assume that there is a twig $x'_1$ and its bud $S'$
such that $x_1$ belongs to $S'$ and $x_2$ does not.
Clearly, $x'_1$ has degree $k$ in $G - S'$.
We consider a $\Theta$-spanner $Y$ between $x'_1$ and $x_2$ in $G - S'$;
and we let $N$ denote the set of all vertices adjacent to $x'_1$ in $Y$.
We choose a vertex $y$ of $N$ such that $y$ is adjacent to $x_1$ in $G$.
Clearly, the subgraph of $Y$
induced by $N \cup \{ x'_1 \} \sm \{ y \}$ is a path,
and we apply Lemmas~\ref{path} and~\ref{twig} and extend this path by adding vertices of $S'$.
We consider the resulting path and the edge $x_1 y$, and we
extend the graph $Y - x'_1$ into a $\Theta$-spanner between $x_1$ and $x_2$ in $G$.
\end{proof}

Finally, we use the tools introduced in this section and prove Theorem~\ref{main}.

\begin{proof}[Proof of Theorem~\ref{main}]
For $k = 2$, the statement is satisfied by Lemma~\ref{ham}.
We assume that $k \geq 3$.
We let $G$ be a $k$-tree of toughness greater than $\frac{k}{3}$,
and we let $n$ denote the number of its vertices.
We note that if $n \leq k + 3$, then $G$ is Hamilton-connected;
so we can assume that $n \geq k + 4$.
We suppose that the statement is satisfied for graphs on at most $n-1$ vertices,
and we show it for $G$
(that is,
we show that for an arbitrary pair of vertices $x_1$ and $x_2$,
$G$ has a Hamilton path between $x_1$ and $x_2$).

Let us suppose that $G$ has a twig distinct from $x_1$ and $x_2$.
By Lemma~\ref{twoGood}, we can choose a twig $v$ 
such that $x_1$ does not belong to the bud $S$ of $v$.
In case $x_2$ belongs to $S$,
we consider a Hamilton path between $x_1$ and $v$ in $G - x_2$,
and we extend it by adding the edge $v x_2$.
In case neither $x_1$ nor $x_2$ belongs to $S$,
we consider a Hamilton path between $x_1$ and $x_2$ in $G - S$,
and we note that it can be extended into a desired path in $G$
(by Lemmas~\ref{path} and~\ref{twig}).

We assume that every twig of $G$ belongs to $\{ x_1, x_2 \}$.
By Lemma~\ref{twoGood}, we can assume that $x_1$ and $x_2$ are non-adjacent twigs
and the corresponding buds $S_1$ and $S_2$ are disjoint.
We consider the graph $G' = G - S_1 - S_2$.
We note that $G'$ is distinct from $K_4$ and $x_1$ and $x_2$ have degree $k$ in $G'$,
and that $G'$ is a $k$-tree of toughness greater than $\frac{k}{3}$ (by Lemma~\ref{twig}).

We consider a $\Theta$-spanner $Z$ between $x_1$ and $x_2$ in $G'$ given by Proposition~\ref{stave}.
Clearly, $Z$ forms three paths in $G' - x_1 - x_2$.
We note that we can join these paths
(using the adjacency of their ends and using the vertices of $S_1$ and $S_2$)
and obtain a Hamilton path from $S_1$ to $S_2$ in $G - x_1 - x_2$.
Thus, we get a Hamilton path between $x_1$ and $x_2$ in $G$. 
\end{proof}


%
To clarify the relation between Theorem~\ref{bohme} and the
case $k = 3$ of Theorem~\ref{main}, we note the following:

\begin{lemma}\label{coin}
A graph of toughness greater than $1$
is chordal planar
if and only if it is either a $3$-tree or $K_1$ or $K_2$.
\end{lemma}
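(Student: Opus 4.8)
The plan is to prove both directions of the equivalence. For the easy direction, I would first verify that every $3$-tree of toughness greater than $1$ (as well as $K_1$ and $K_2$) is chordal and planar. Chordality is immediate since every $k$-tree is chordal, as noted early in the paper. For planarity, I would argue by induction on the iterative construction: a $3$-tree is built by repeatedly adding a simplicial vertex of degree $3$ into a triangular face. Starting from $K_3$ (or $K_4$), each new vertex is placed inside a triangle, which preserves a plane embedding. So every $3$-tree is planar regardless of toughness, and in particular the toughness hypothesis is not even needed for this direction; $K_1$ and $K_2$ are trivially chordal planar.

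For the harder converse, I would take an arbitrary chordal planar graph $G$ of toughness greater than $1$ and show it must be a $3$-tree (or $K_1$ or $K_2$). The natural tool is that a chordal graph has a perfect elimination ordering, so $G$ has a simplicial vertex $v$; its neighbourhood induces a clique. The key structural input is planarity, which forbids $K_5$, so every clique in $G$ has at most four vertices. The plan is to argue that, apart from the small exceptional graphs, the toughness condition forces every simplicial vertex to have degree exactly $3$. If a simplicial vertex had degree at most $2$, or if the graph were ``thin'' in the wrong way, I would exhibit a small separating set whose removal leaves too many components, contradicting toughness greater than $1$; conversely a simplicial vertex of degree $4$ would force a $K_5$ (via the clique on the vertex and its four neighbours), violating planarity. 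Having pinned the degree of a simplicial vertex at $3$, I would delete it, check that the smaller graph is again chordal, planar, and of toughness greater than $1$ (using Proposition~\ref{simpl} for the toughness, since deleting a simplicial vertex does not decrease toughness), and induct to conclude that $G$ is assembled by successively adding degree-$3$ simplicial vertices, i.e.\ $G$ is a $3$-tree.

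The main obstacle I anticipate is the careful bookkeeping in the converse to rule out degrees other than $3$ while respecting the toughness threshold, and in particular handling the base cases and the small graphs cleanly. One must ensure that the clique $N(v)$ really is a triangle and not, say, a smaller clique that would make $v$ a low-degree vertex creating a cut; this is precisely where toughness greater than $1$ enters, since a low-degree simplicial vertex (or a separating clique of size less than $3$) yields a separating set $X$ with $|X|$ no larger than the number of components of $G-X$. I would organize the argument so that the exceptional outcomes $K_1$ and $K_2$ emerge exactly as the graphs too small to contain a degree-$3$ simplicial vertex, and verify that every chordal planar graph on at least three vertices with the toughness hypothesis is $2$-connected enough to avoid degenerate separations. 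The forbidden $K_5$ minor from planarity and the component-counting from toughness are the two levers; making them interlock correctly at the reduction step is the delicate part.
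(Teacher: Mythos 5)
Your converse direction is sound and is in fact a reasonable self-contained alternative to the paper's argument: the paper simply combines the observation that toughness greater than $1$ forces $3$-connectedness (or a small complete graph) with Patil's characterization (Lemma~\ref{3}), whereas you re-prove the relevant special case by induction on a simplicial vertex, using planarity ($K_5$-freeness) to cap its degree at $3$ and toughness to bound it below by $3$. Minor bookkeeping aside ($K_3$ and $K_4$ are themselves $3$-trees, so only $K_1$ and $K_2$ are genuinely exceptional), that half would go through.

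However, your ``easy'' direction contains a genuine error: it is \emph{not} true that every $3$-tree is planar, and the toughness hypothesis is needed exactly there, contrary to your claim that it ``is not even needed.'' Counterexample: start from a triangle $abc$ and add three simplicial vertices $v_1, v_2, v_3$, each adjacent to all of $a, b, c$. This is a $3$-tree on six vertices containing $K_{3,3}$ as a subgraph, hence non-planar. Your inductive embedding argument breaks down because the triangle into which a new simplicial vertex is added need not bound a face of the current plane embedding: after $v_1$ and $v_2$ have been placed on the two sides of $abc$, no face of the embedding is bounded by $abc$, so $v_3$ cannot be inserted. The paper handles this direction via the criterion of Markenzon et al.\ (Lemma~\ref{emb}): a $3$-tree is planar if and only if deleting the vertex set of any triangle leaves at most two components --- and it is precisely toughness greater than $1$ that guarantees a removal of three vertices leaves at most two components. (In the counterexample, removing $\{a,b,c\}$ leaves three components, so its toughness is at most $1$, consistent with the lemma.) To repair your proof you must invoke or prove such a criterion, or otherwise show that under the toughness hypothesis the triangle used at each construction step is always a face; as written, this direction is unproven.
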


For convenience, we include a short proof of Lemma~\ref{coin}.
We shall use the facts stated in Lemmas~\ref{3} and~\ref{emb}
(shown by Patil~\cite{patil} and by Markenzon et al.~\cite[Lemma~24]{mjp}, respectively).
We recall that a graph is \emph{$H$-free}
if it contains no copy of the graph $H$ as an induced subgraph.

\begin{lemma}\label{3}
Let $k \geq 1$.
A graph (distinct from $K_k$) is a $k$-tree if and only if it is $k$-connected chordal and $K_{k+2}$-free. 
\end{lemma}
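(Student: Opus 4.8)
The plan is to prove both implications by induction on the number of vertices $n$, relying on the classical fact (due to Dirac) that every chordal graph contains a simplicial vertex, together with the observation that a simplicial vertex together with its neighbourhood forms a clique.

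For the forward direction, let $G$ be a $k$-tree distinct from $K_k$. Chordality is already noted in the excerpt. For $K_{k+2}$-freeness I would induct along the construction of $G$ from $K_k$: adding a simplicial vertex $v$ of degree $k$ only creates cliques inside $N(v) \cup \{v\}$, which has size $k+1$, so no clique of size $k+2$ ever appears. For $k$-connectivity I would again induct. The base case $K_{k+1}$ is $k$-connected. In the inductive step let $v$ be the last simplicial vertex added, of degree $k$, so that $G - v$ is a $k$-tree that is either $K_k$ (giving $G = K_{k+1}$) or, by induction, $k$-connected. Since $v$ is attached to a $k$-clique, no set of fewer than $k$ vertices can separate $v$ from the rest, and combining this with the $k$-connectivity of $G - v$ yields $k$-connectivity of $G$.

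For the backward direction, let $G$ be $k$-connected, chordal and $K_{k+2}$-free with $G \neq K_k$, and proceed by induction on $n$. I would pick a simplicial vertex $v$; its closed neighbourhood is a clique, so $K_{k+2}$-freeness forces $\deg(v) \leq k$ while $k$-connectivity forces $\deg(v) \geq k$, whence $\deg(v) = k$ and $N(v)$ induces $K_k$. If $G - v = K_k$ (that is, $n = k+1$), then $G$ is $K_k$ together with the simplicial vertex $v$, hence a $k$-tree. Otherwise $G - v$ is chordal and $K_{k+2}$-free on at least $k+1$ vertices; once it is known to be $k$-connected, the induction hypothesis makes it a $k$-tree, and adjoining $v$ (attached to the $k$-clique $N(v)$) exhibits $G$ as a $k$-tree.

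The main obstacle is precisely the remaining point of the backward direction: verifying that $G - v$ stays $k$-connected after deleting the simplicial vertex $v$. I would argue by contradiction. Suppose some $X$ with $|X| < k$ separates $G - v$. Since $|X| < k$, the set $X$ does not separate $G$, so $G - X$ is connected. Because $|N(v)| = k > |X|$, the vertex $v$ retains at least one neighbour in $G - X$, and since $N(v)$ is a clique, all surviving neighbours of $v$ lie in a single component of $G - X$; hence deleting the simplicial vertex $v$ from the connected graph $G - X$ cannot disconnect it. This contradicts the choice of $X$, so $G - v$ is $k$-connected, closing the induction.
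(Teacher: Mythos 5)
The paper does not actually prove this lemma: it is quoted as a known characterization due to Patil~\cite{patil}, and is used (together with Lemma~\ref{emb}) only to derive Lemma~\ref{coin}. So there is no in-paper proof to compare against, and your argument must stand on its own --- which it does. The forward direction is routine and you handle it correctly: induction along the construction shows every clique lies inside some set $N(v)\cup\{v\}$ of size $k+1$, and the standard two-case separator analysis (the separator contains the newly added simplicial vertex or it does not) gives $k$-connectivity. The substance is in the backward direction, and you correctly isolate and resolve the one delicate point, namely that $G-v$ stays $k$-connected when the simplicial vertex $v$ is deleted. Your contradiction argument is sound: if $X$ with $|X|<k$ separated $G-v$, then $G-X$ is connected by $k$-connectivity of $G$, and $v$ cannot be a cut vertex of $G-X$ because its surviving neighbours form a clique and therefore cannot meet two distinct components of $(G-v)-X$. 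The only slip is notational: you say the surviving neighbours lie in a single component ``of $G-X$'' (which is trivially true, as $G-X$ is connected) when what the argument needs, and what your clique observation actually delivers, is that they lie in a single component of $(G-v)-X$. Combined with the degree computation $\deg(v)=k$ (forced from above by $K_{k+2}$-freeness of the clique on $N(v)\cup\{v\}$ and from below by $k$-connectivity) and Dirac's theorem providing a simplicial vertex, the induction closes. The net effect of your route is a short, elementary, self-contained proof of a fact the paper outsources to a reference.
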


\begin{lemma}\label{emb}
Let $G$ be a $3$-tree.
Then $G$ is planar if and only if
$G - C$ consists of at most two components for every set of vertices $C$ inducing $K_3$.
\end{lemma}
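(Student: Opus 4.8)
The plan is to prove the two implications separately, using the $3$-connectivity of $3$-trees (via Lemma~\ref{3}) for the forward direction and induction on the number of vertices for the backward direction.

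For the forward direction I would argue the contrapositive: assuming a triangle $C = \{a,b,c\}$ for which $G - C$ has at least three components $D_1, D_2, D_3$, I would exhibit a $K_{3,3}$-minor. The key intermediate claim is that every component $D$ of $G - C$ is \emph{full}, meaning adjacent to all three of $a, b, c$. To see this, suppose $D$ has no neighbour of $a$; then every path from $a$ to $D$ must pass through $\{b,c\}$ (an edge leaving $a$ cannot reach $D$ directly, and cannot reach $D$ through another component without re-entering $C$ at $b$ or $c$), so $\{b,c\}$ separates $a$ from $D$. This contradicts the $3$-connectivity of $G$, which holds by Lemma~\ref{3} since $G \neq K_3$. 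With fullness established, contracting each $D_i$ to a single vertex $d_i$ yields the complete bipartite graph $K_{3,3}$ with parts $\{a,b,c\}$ and $\{d_1,d_2,d_3\}$ as a minor, so $G$ is non-planar.

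For the backward direction I would induct on $|V(G)|$, the base case $G \in \{K_3, K_4\}$ being planar. Given $G$ with at least five vertices satisfying the hypothesis, I would delete a simplicial vertex $v$ of degree $3$, whose neighbourhood is a triangle $T = \{x,y,z\}$, to obtain the $3$-tree $G' = G - v$. First I would check that $G'$ still satisfies the hypothesis: for a triangle $C \subseteq V(G')$ we have $G' - C = (G-C) - v$, and since $v$ remains simplicial in $G-C$ its deletion cannot increase the number of components, so $G' - C$ has at most two components. By the induction hypothesis $G'$ is planar. To insert $v$, I would apply the hypothesis to the specific triangle $T$: because the neighbourhood of $v$ is exactly $T$, the singleton $\{v\}$ is itself a component of $G - T$, so $G - T$ having at most two components forces $G' - T$ to be connected or empty. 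In a planar embedding of $G'$, the cycle $T$ bounds two regions, and the connected subgraph $G' - T$, being vertex-disjoint from $T$, lies entirely in one of them; the other region is then a triangular face bounded by the edges $xy$, $yz$, $zx$, into which $v$ can be placed and joined to $x, y, z$. This extends the embedding to $G$.

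I expect the main obstacle to be the final geometric step of the backward direction, namely arguing cleanly that a triangular face bounded by $T$ is available for $v$. This rests on the facts that $G'$ is connected as a $3$-tree and that a connected subgraph meeting no vertex of the cycle $T$ cannot have vertices on both sides of $T$ in a planar embedding; making this precise, together with disposing of the degenerate case $G' = T$ (where $G = K_4$), is where care is needed. By contrast, the full-component claim driving the forward direction, though the main combinatorial point, follows directly from $3$-connectivity.
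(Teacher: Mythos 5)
Your proof is correct, but there is nothing in the paper to compare it against: the paper does not prove Lemma~\ref{emb} at all, quoting it instead from Markenzon et al.~\cite{mjp} (their Lemma~24). Your argument is the natural self-contained proof of this characterization. The forward direction is sound: once a triangle $C=\{a,b,c\}$ with three components is assumed, $G$ has at least $6$ vertices, so $G\neq K_3$ and Lemma~\ref{3} yields $3$-connectivity; a component with no neighbour of $a$ has its entire outside-neighbourhood inside $\{b,c\}$, a $2$-cut, which proves fullness, and contracting the three full components gives the $K_{3,3}$-minor. The backward induction is also sound, including the key step where the hypothesis applied to $T=N(v)$ together with the fact that $\{v\}$ is itself a component of $G-T$ forces $G'-T$ to be connected or empty. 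Two small points would make it fully rigorous: (i) to know that $G'=G-v$ is again a $3$-tree, take $v$ to be the last vertex added in an iterative construction of $G$ (or invoke the standard fact that deleting any simplicial vertex of degree $3$ from a $3$-tree leaves a $3$-tree) --- as written, ``a simplicial vertex of degree $3$'' leaves this implicit; and (ii) for the final geometric step, which you correctly flag as the delicate one, it is cleanest to embed $G'$ in the sphere: the closed curve traced by the triangle $T$ bounds two open regions, the connected graph $G'-T$ together with the interiors of all edges incident with it lies in one region (no edge can cross the curve), so the other region contains no vertex and no edge, hence is a face bounded exactly by $xy$, $yz$, $zx$, into which $v$ can be inserted; working on the sphere also disposes of any special treatment of the outer face. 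Your check that the hypothesis is inherited by $G'$ (deleting the still-simplicial $v$ from $G-C$ cannot increase the number of components) is exactly right.
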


The combination of Lemmas~\ref{3} and~\ref{emb} gives the desired equivalence.

\begin{proof}[Proof of Lemma~\ref{coin}]
We consider a chordal planar (and thus $K_5$-free) graph.
By the assumption on toughness, the graph is either $3$-connected
or $K_1$ or $K_2$ or $K_3$, and we apply the case $k = 3$ of Lemma~\ref{3}.

For the other direction, we consider a $3$-tree of toughness greater than $1$.
We note that a removal of three vertices creates at most two components,
and we apply Lemma~\ref{emb}.
\end{proof}


\section{Long paths in $\mathbf{1}$-tough chordal planar graphs}
\label{s6}
In this section, we shall show the following: 

\begin{prop}\label{short2}
For every $n_0$, there exists a $1$-tough chordal planar graph on $n>n_0$
vertices whose longest cycle has $4\log_2 \frac{n+2}{3}$ vertices
and whose longest path has $2(\log_2 \frac{n+2}{3})^2 + 2$ vertices.
\end{prop}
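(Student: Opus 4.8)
The proposition asks me to construct, for arbitrarily large $n_0$, a 1-tough chordal planar graph on $n > n_0$ vertices whose longest cycle is logarithmic in $n$ and whose longest path is roughly $(\log n)^2$.

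Let me think about what structure would give such short paths and cycles.

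**Key idea: squares of trees.**

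The text says "such graphs can be obtained by considering the square of particular trees." So I should think about $T^2$ where $T$ is a tree. The square of a graph has the same vertices, with edges between vertices at distance $\le 2$.

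Why is the square of a tree relevant?
- The square of a tree is chordal. (This is a known fact - squares of trees are chordal.)
- If $T$ is planar (all trees are planar), is $T^2$ planar? Not always. But for specific trees (like caterpillars or specific balanced trees) the square might be planar.

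**What tree gives logarithmic cycles?**

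Given the formula $4\log_2\frac{n+2}{3}$, with $n = |V(T)|$, I want $n+2 = 3 \cdot 2^{h}$ roughly, suggesting a complete binary tree of height $h$.

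Let me count: a complete binary tree of height $h$ has $2^{h+1} - 1$ vertices. Hmm, let me reconsider. Perhaps it's a specific tree.

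If $T$ is a complete binary tree, $T^2$ would connect each node to its parent, grandparent... no wait, distance 2 means parent's parent and siblings.

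**The cycle structure.**

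In the square of a tree, a cycle must somehow traverse the tree. In a tree, any cycle in $T^2$ is constrained because the tree has no cycles. The longest cycle in the square of a complete binary tree should be related to the number of "levels" or some path structure - this would give the logarithmic bound.

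**Toughness = 1.**

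For 1-toughness: I need to show that for every separating set $X$, the number of components of $(T^2) - X$ is at most $|X|$. The worst case would be removing internal nodes to disconnect subtrees. In a binary tree's square, removing a node disconnects... I'd need to check that removing $k$ nodes creates at most $k$ components.

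**My proof plan.**

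\begin{proof}
The plan is to take $T$ to be a specific balanced binary tree and set $G = T^2$, then verify the four required properties: chordality, planarity, 1-toughness, and the bounds on the longest cycle and longest path.

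First I would define $T$ precisely. Taking $n+2 = 3 \cdot 2^m$ suggests $T$ is a complete binary tree with $m$ levels (suitably rooted, perhaps with the root doubled or a small modification at the top), chosen so that $|V(T)| = n$ and $\log_2\frac{n+2}{3} = m$. I would fix the exact tree so that the vertex count matches the formula, giving infinitely many values of $n$.

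Next I would establish the structural properties of $G = T^2$. Chordality follows because squares of trees are chordal (any induced cycle of length $\ge 4$ would force a cycle in $T$). For planarity, I would exhibit an explicit planar embedding: in a balanced binary tree, the square adds edges only between siblings and between a node, its parent, and grandparent, and for this restricted tree one can draw $G$ without crossings level by level. The main work here is producing a crossing-free drawing, but the regular recursive structure of the binary tree makes this manageable.

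The crucial quantitative step is bounding the longest cycle and longest path, and this I expect to be the main obstacle. In $T^2$, any cycle corresponds to a closed walk in $T$ using steps of tree-distance at most $2$; since $T$ is a tree, such a cycle is forced to "turn around" at leaves and can visit vertices along only a bounded number of root-to-leaf branches before it must retrace. I would argue that a longest cycle is confined to essentially two root-to-leaf paths (plus the short cross-links the square provides), giving the $4\log_2\frac{n+2}{3}$ bound, where the factor reflects the two branches together with the doubling from distance-$2$ shortcuts. For the longest path, a path may descend into more subtrees before being blocked, and a careful accounting of how many branches of length $m$ a path can chain together yields the $2m^2 + 2$ bound. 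Making these combinatorial arguments rigorous — precisely characterizing which subtrees a cycle or path can enter in $T^2$ and how often — is the delicate part.

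Finally I would verify 1-toughness. Since $G$ is chordal and obtained as the square of a balanced tree, I would show that every separating set $X$ leaves at most $|X|$ components: removing a vertex set from $T^2$ can only disconnect the tree at the removed internal nodes, and by the distance-$2$ adjacencies each removed node "absorbs" a component, so the number of components never exceeds $|X|$. A matching separating set (for instance the set of nodes at one internal level) shows the toughness is exactly $1$ rather than larger. This completes the construction.
\end{proof}
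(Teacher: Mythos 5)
Your overall construction coincides with the paper's: take the square of a balanced tree of maximum degree $3$ whose leaves are all at distance $r$ from a center (your ``complete binary tree with a small modification at the top'' is exactly the paper's balanced cubic tree on $3\cdot 2^r-2$ vertices), get chordality from a perfect elimination ordering, planarity from the bounded degree (the paper cites the Harary--Karp--Tutte criterion: $T^2$ is planar iff $T$ has no vertex of degree greater than $3$, which spares you the explicit embedding), and $1$-toughness from Chv\'atal's theorem that the square of a $k$-connected graph is $k$-tough, which is the rigorous form of your ``each removed node absorbs a component'' sketch.

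However, the heart of the proposition --- the exact values $4r$ for the longest cycle and $2r^2+2$ for the longest path --- is precisely what you leave unproved: you label it ``the main obstacle'' and ``the delicate part'' and offer only heuristics, and the heuristics point the wrong way. A cycle confined to ``essentially two root-to-leaf paths'' would have about $2r$ vertices, and there is no ``doubling from distance-$2$ shortcuts'': a cycle in $T^2$ cannot have more vertices than the subtree it is built from. The extremal structure is a caterpillar, a leaf-to-leaf spine of length $2r$ together with one pendant vertex at each internal spine vertex, which is where $4r$ comes from. The missing ingredient that makes all of this tractable is Neuman's characterization (Theorem~\ref{square} of the paper): $T^2$ is Hamiltonian iff $T$ is $S(K_{1,3})$-free, and $T^2$ has a Hamilton path iff $T$ is $S(K_{1,5})$-free, $\FF$-free and $\XX$-free. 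With that in hand, the longest cycle (respectively path) of $T^2$ is controlled by the largest subtree of $T$ avoiding the relevant obstructions, and the proof reduces to two concrete counting arguments in the balanced cubic tree: the largest $S(K_{1,3})$-free subtree has $4r$ vertices, and the largest $\XX$-free subtree has $2r^2+2$ vertices (the paper counts the latter by bounding the degree sequence of the subtree after deleting its leaves). Without invoking such a characterization, your plan to ``precisely characterize which subtrees a cycle or path can enter in $T^2$'' amounts to reproving Neuman's theorem from scratch; note in particular that an edge of $T^2$ may pass through a vertex the cycle never visits, so even relating a cycle of $T^2$ to a subtree of comparable size requires an argument your proposal does not supply.
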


In particular, the first part of Proposition~\ref{short2}
immediately implies the result of Theorem~\ref{0}.

\begin{proof}[Proof of Theorem~\ref{0}]
We consider an infinite sequence of non-isomorphic graphs
given by Proposition~\ref{short2}. 
We recall that a graph on $n$ vertices
belonging to this sequence has a longest cycle on $4\log_2 \frac{n+2}{3}$ vertices.
Consequently, the considered shortness exponent is at most 
$\displaystyle \lim_{n\to\infty} \log_n (4\log_2 \tfrac{n+2}{3}) = 0$.
\end{proof}

We recall that a tree is \emph{cubic} if 
every non-leaf vertex has degree $3$. 
In order to prove Proposition~\ref{short2},
we consider the square of `balanced' cubic trees,
and we combine several known facts
(recalled in Theorems~\ref{square},~\ref{t} and Propositions~\ref{c} and~\ref{p}).
We let $G^2$ denote the \emph{square} of a graph $G$, that is,
the graph on the same vertex set as $G$ in which
two vertices are adjacent if and only if their distance in $G$ is
either $1$ or $2$.
Studying squares of trees,
Neuman~\cite{Neuman} presented necessary and sufficient conditions 
for the existence of a Hamilton path between a given pair of vertices.
As a corollary,
the characterization of trees whose square has a Hamilton cycle (Hamilton path)
follows.
(Later, these results were also proven separately, see~\cite{hs, Gould}.)
We consider the trees depicted in Figure~\ref{f:FX},
and we recall these characterizations (see Theorem~\ref{square}).
Similarly as above, we recall that a graph is \emph{$\HH$-free}
if it contains no copy of a graph from the family $\HH$ as an induced subgraph. 

\begin{figure}[ht]
    \centering
    \includegraphics[scale=0.7]{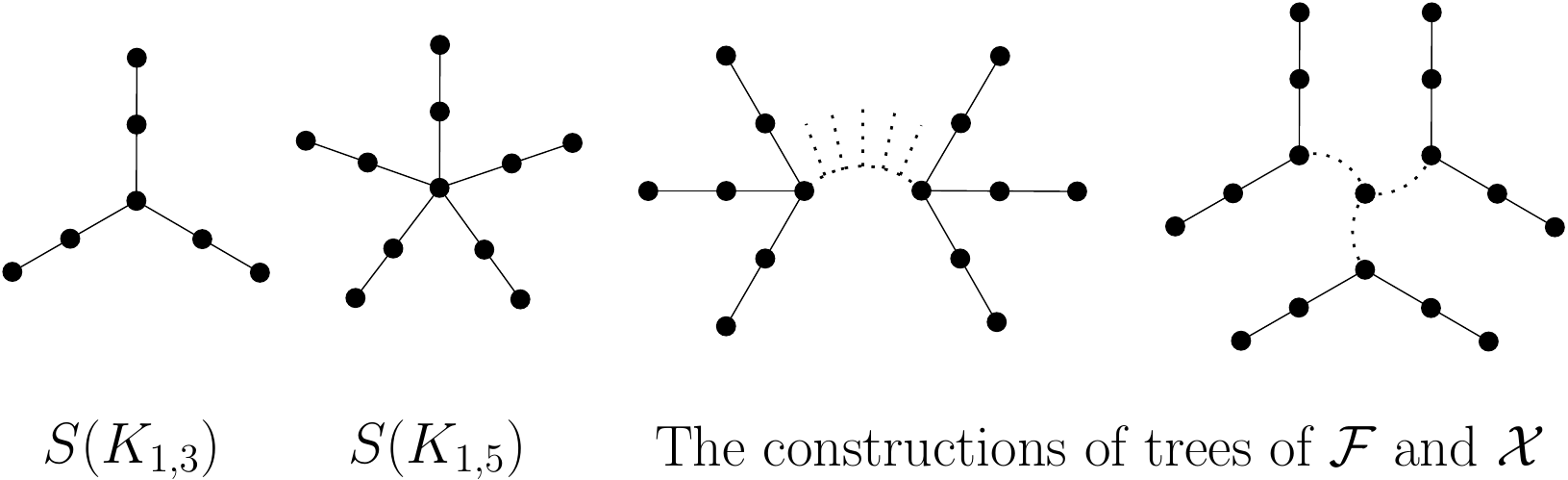}
    \caption{The trees $S(K_{1,3})$, $S(K_{1,5})$ and
    the families of trees $\FF$ and $\XX$.
    The trees of $\FF$ are obtained from two copies of $S(K_{1,3})$
    by joining their central vertices with a path (possibly an edge) 
    and adding one new vertex adjacent (by a pendant edge) to each interior vertex of this path.
    The trees of $\XX$ are obtained from three copies of $P_5$
    and from a tree containing precisely three leaves by identifying each of these
    leaves with the central vertex of one $P_5$.} 
    \label{f:FX}
\end{figure}

\begin{theorem}\label{square}
Let $T$ be a tree. The following statements are satisfied:
\begin{enumerate}[nosep]
\item $T^2$ is Hamiltonian if and only if $T$ (on at least $3$ vertices) is $S(K_{1,3})$-free.
\item $T^2$ has a Hamilton path if and only if $T$ is $S(K_{1,5})$-free,
$\FF$-free and $\XX$-free.
\end{enumerate}
\end{theorem}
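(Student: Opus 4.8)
The plan is to reduce both equivalences to intrinsic properties of the tree, and then to prove sufficiency and necessity separately using a single structural tool. First I would record the combinatorial reformulation of the forbidden-subtree conditions. A tree is $S(K_{1,3})$-free if and only if it is a \emph{caterpillar} (deleting all leaves leaves a path), equivalently no vertex has three non-leaf neighbours: a center $c$ with three non-leaf neighbours $a_1,a_2,a_3$ (each having a further neighbour $b_i$) is exactly a copy of $S(K_{1,3})$, and conversely the non-leaf vertices of a $S(K_{1,3})$-free tree induce a subtree of maximum degree $2$, hence a path. Thus statement~(1) asserts that $T^2$ is Hamiltonian precisely when $T$ is a caterpillar, and I would phrase the obstructions in~(2) analogously, reading $S(K_{1,5})$, $\FF$ and $\XX$ as the minimal trees that force ``one deep branch too many'' even after the two path-endpoints are spent.

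The common engine is a \emph{crossing lemma}. For an edge $uv$ of $T$, let $T_u$ and $T_v$ be the two subtrees obtained by deleting $uv$. Since two vertices are adjacent in $T^2$ only when their distance in $T$ is at most $2$, every edge of $T^2$ joining $T_u$ to $T_v$ is incident with $u$ or $v$. Hence any Hamilton cycle of $T^2$ meets the cut $(T_u,T_v)$ only in such edges, and does so an even number of times; for a Hamilton path the parity is instead governed by whether the two ends lie on the same side. This controls, locally at each vertex, how a spanning cycle or path may distribute itself among the branches of $T$.

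For sufficiency I would give an explicit construction. When $T$ is a caterpillar, the square of its spine is (Hamilton-)connected, and the pendant leaves attached at a spine vertex, being within distance $2$ of that vertex and of its spine-neighbours, can be woven in one by one; this yields a Hamilton cycle, and the $S(K_{1,5})$/$\FF$/$\XX$-free trees admit the corresponding path construction by the same weaving once the spine is replaced by the appropriate backbone. For necessity of~(1), suppose $c$ has non-leaf neighbours $a_1,a_2,a_3$ with the branches $T_1,T_2,T_3$ of $T-c$ containing them. Each branch-cut must be crossed an even number $\ge 2$ of times by a Hamilton cycle $H$, and by the crossing lemma the only eligible edges are the two $H$-edges at $c$ and the edges $a_ia_j$; not all three edges $a_1a_2,a_1a_3,a_2a_3$ can lie in $H$ (they would form a separated triangle). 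A short incidence count (each $c$-edge serves one cut, each $a_ia_j$ serves two, each cut needs at least two) pins $H$ to a single tight configuration, which then strands some branch $T_i$ from the rest of $H$ — a contradiction.

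The main obstacle, as expected, is the necessity half of~(2): the parity shift caused by the two path endpoints means that up to two ``deep'' branches can be absorbed by placing an endpoint inside them, so the naive degree count no longer closes. I would therefore carry out the branch analysis allowing for the two endpoints, and show that the only residual obstructions are exactly the three families $S(K_{1,5})$, $\FF$ and $\XX$ — verifying both that each of them blocks a spanning path (via a refined version of the crossing count at its branch vertices) and, conversely, that their absence lets the weaving construction terminate. Establishing that this list is exactly these three, with no configuration missed and none redundant, is the delicate bookkeeping at the heart of the proof.
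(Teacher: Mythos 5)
First, a point of comparison you could not have known: the paper does not prove Theorem~\ref{square} at all. It is recalled as a known result, attributed to Neuman~\cite{Neuman} (with later separate proofs by Harary--Schwenk~\cite{hs} and Gould~\cite{Gould}), and is used purely as an imported tool in the proof of Proposition~\ref{short2}. So there is no in-paper argument to measure your attempt against; the relevant question is whether your sketch would stand on its own as a proof of the characterization. For part (1) it largely would: the reformulation ($S(K_{1,3})$-free $=$ caterpillar), the crossing lemma for the cut induced by a tree edge, the weaving construction for sufficiency, and the branch-stranding argument for necessity together form the classical Harary--Schwenk proof. One local flaw: your list of eligible crossing edges at the centre $c$ omits edges joining $a_i$ to a \emph{leaf} neighbour of $c$ (such a pair is at distance $2$ through $c$), so the incidence count as you state it does not close --- configurations such as $a_1a_2, a_1a_3 \in H$ together with edges from $a_2, a_3$ to leaves of $c$ satisfy all parity constraints, and only the stranding step (the cycle's intersection with $T_1$ degenerates to the single vertex $a_1$, so $b_1$ is never visited) yields the contradiction. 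That step is the real engine, and you only gesture at it.

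The genuine gap is part (2), and it is not a matter of polish. You never determine the structure of $S(K_{1,5})$-free, $\FF$-free, $\XX$-free trees, the ``appropriate backbone'' replacing the spine is left undefined, and the necessity direction --- that these three families constitute the \emph{complete} list of obstructions to a Hamilton path in $T^2$ --- is explicitly deferred as ``delicate bookkeeping.'' That bookkeeping is the theorem: unlike the Hamiltonicity case, traceability of $T^2$ cannot be decided by a local count at a single branch vertex, because (as you correctly observe) the two path ends can each absorb one deep branch, and the interaction of several near-critical branch vertices along the tree is exactly what the families $\FF$ and $\XX$ encode. Closing this requires either Neuman's stronger two-vertex result (a characterization of when $T^2$ has a Hamilton path between two \emph{prescribed} vertices, proved by induction on the tree, from which the stated characterization follows by quantifying over the ends) or Gould's direct induction; neither is replaced by your parity analysis. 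As written, your proposal proves (modulo the repair above) statement (1) and asserts statement (2).
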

%


In addition, we recall the following property of squares of graphs (shown by Chv\'{a}tal~\cite{chva}).

\begin{theorem}\label{t}
The square of a $k$-connected graph is $k$-tough.
\end{theorem}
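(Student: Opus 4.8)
The plan is to verify the toughness condition directly for an arbitrary separating set of $G^2$. Let $G$ be the $k$-connected graph and let $X$ be any set of vertices for which $G^2 - X$ is disconnected, say with components $C_1, \dots, C_c$ where $c \geq 2$; it then suffices to show $|X| \geq kc$. (If $G^2$ happens to be complete its toughness is infinite and there is nothing to prove, so I may assume such an $X$ exists.)

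First I would record the basic link between the two adjacency relations: two vertices are adjacent in $G^2$ exactly when they are at distance at most $2$ in $G$. Writing $N_G(C_i)$ for the set of vertices lying outside $C_i$ but having a $G$-neighbour in $C_i$, I would observe that $N_G(C_i) \subseteq X$. Indeed, if some $u \notin X$ had a $G$-neighbour $v \in C_i$, then $uv$ would be an edge of $G^2 - X$ and hence $u$ would lie in the same component $C_i$; so every $G$-neighbour of $C_i$ must belong to $X$.

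The heart of the argument is that the sets $N_G(C_1), \dots, N_G(C_c)$ are pairwise disjoint. If some $x$ lay in both $N_G(C_i)$ and $N_G(C_j)$ with $i \neq j$, it would have $G$-neighbours $v_i \in C_i$ and $v_j \in C_j$; then $v_i$ and $v_j$ are at distance $2$ in $G$, hence adjacent in $G^2$, and since neither lies in $X$ this contradicts $C_i$ and $C_j$ being distinct components of $G^2 - X$. This is the one place where the squaring operation is genuinely used, and I expect it to carry essentially the whole weight of the proof; the remaining steps are routine once disjointness is in hand.

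Finally I would invoke $k$-connectivity to bound each $|N_G(C_i)|$ from below. Because $c \geq 2$, the set $V(G) \setminus (C_i \cup N_G(C_i))$ contains the other components and is therefore nonempty, and by the definition of $N_G(C_i)$ there is no $G$-edge joining $C_i$ to this set; thus $N_G(C_i)$ is a separating set of $G$, and $k$-connectivity forces $|N_G(C_i)| \geq k$. Combining the disjointness with these bounds yields $|X| \geq \sum_{i=1}^{c} |N_G(C_i)| \geq kc$, which establishes that $G^2$ is $k$-tough.
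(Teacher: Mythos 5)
Your proof is correct and complete: the key disjointness of the sets $N_G(C_i)$ (forced by distance-$2$ adjacency in $G^2$), together with the observation that each $N_G(C_i)$ is a vertex cut of $G$ and hence has size at least $k$, gives $|X| \geq kc$ exactly as needed. Note that the paper itself offers no proof of this statement --- it is recalled as a known result of Chv\'atal --- so there is nothing internal to compare against; your argument is the standard one for this classical theorem, and every step (including the edge cases: $G^2$ complete, and the nonemptiness of $V(G) \setminus (C_i \cup N_G(C_i))$ via the other components) is handled soundly.
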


We recall that (as observed by Fulkerson and Gross~\cite{elim})
a graph $G$ is chordal if and only if it has a \emph{perfect elimination ordering},
that is,
an ordering $(v_1, v_2, \dots, v_n)$ of all vertices of $G$ such that 
$v_i$ is a simplicial vertex of $G_i$ for every $i = 1, 2, \dots, n$,
where $G_i$ is the subgraph of $G$ induced by $\{ v_1, v_2, \dots, v_i \}$.
We note the following:

\begin{prop}\label{c}
The square of a tree is a chordal graph.
\end{prop}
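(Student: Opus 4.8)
The plan is to exhibit a perfect elimination ordering of $T^2$, using the characterization of chordal graphs recalled just above. I would build this ordering from a leaf-elimination of the underlying tree, read in reverse so as to match the convention in which $v_i$ is required to be simplicial in the subgraph induced by $\{v_1, \dots, v_i\}$.

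First I would record two elementary observations. The first is that deleting a leaf of a tree does not change the distance between any pair of remaining vertices, since a leaf lies on no shortest path joining two other vertices. Consequently, whenever $T'$ is a subtree of $T$ obtained by successively deleting leaves, two of its vertices are at distance at most $2$ in $T'$ if and only if they are at distance at most $2$ in $T$; that is, $(T')^2 = (T^2)[V(T')]$, the subgraph of $T^2$ induced by the vertices of $T'$. The second observation is that a leaf is simplicial in the square: if $v$ is a leaf with unique neighbour $u$ in a tree, then $N_{T^2}(v) = \{u\} \cup (N_T(u) \sm \{v\})$, and this set is a clique of $T^2$, because $u$ is within distance $1$ of each of the remaining neighbours of $u$, while any two such neighbours are within distance $2$ of each other through $u$.

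With these in hand, the argument is immediate. I would fix a leaf-elimination order $\ell_1, \ell_2, \dots, \ell_n$ of $T$ (remove a leaf, then repeat), so that for each $j$ the set $\{\ell_j, \dots, \ell_n\}$ induces a subtree $T^{(j)}$ of $T$ in which $\ell_j$ is a leaf. By the first observation $(T^{(j)})^2 = (T^2)[\{\ell_j, \dots, \ell_n\}]$, and by the second observation $\ell_j$ is then simplicial in this square. Setting $v_i = \ell_{n-i+1}$ therefore yields an ordering in which each $v_i$ is simplicial in $(T^2)[\{v_1, \dots, v_i\}]$, that is, a perfect elimination ordering of $T^2$; hence $T^2$ is chordal.

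The only point requiring care, and the step I would expect to be the main obstacle, is the first observation, the identity $(T')^2 = (T^2)[V(T')]$. It is essential that $T'$ be obtained by deleting leaves rather than arbitrary vertices, for only then are distances among the surviving vertices preserved and the induced subgraph of the square coincides with the square of the subtree; an arbitrary induced subgraph of $T^2$ need not be the square of anything. Once this is secured, the leaf being simplicial and the reversal of the ordering are routine.
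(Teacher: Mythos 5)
Your proof is correct and takes essentially the same approach as the paper: the paper's one-line proof asserts that a perfect elimination ordering of $T$ (which, for a tree, is exactly a reverse leaf-elimination ordering) is also a perfect elimination ordering of $T^2$. Your two observations (distance preservation under leaf deletion, and a leaf being simplicial in the square) simply supply the details behind the word ``clearly''.
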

\begin{proof}
Clearly, a perfect elimination ordering of the tree is a
perfect elimination ordering of its square. 
\end{proof}

We shall also use the following fact
(which we view as a corollary of the characterization of
graphs whose squares are planar by Harary et al.~\cite{hkw}). 

\begin{prop}\label{p}
Let $T$ be a tree.
Then $T^2$ is planar if and only if
$T$ has no vertex of degree greater than $3$.
\end{prop}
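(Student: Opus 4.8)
The plan is to treat the two implications separately, the forward one being a one-line observation and the backward one carrying all of the content.

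For the direction ``$T^2$ planar $\Rightarrow$ $T$ has no vertex of degree greater than $3$'', I would argue by contraposition. Suppose some vertex $v$ has four neighbours $w_1, w_2, w_3, w_4$ in $T$. Each $w_i$ is adjacent to $v$ in $T$, hence in $T^2$; and any two of the $w_i$ lie at distance $2$ in $T$ (through $v$), hence are adjacent in $T^2$. Therefore $\{v, w_1, w_2, w_3, w_4\}$ induces a copy of $K_5$ in $T^2$, and $T^2$ is non-planar. This direction requires no further work.

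For the direction ``$\Delta(T) \le 3 \Rightarrow T^2$ planar'', the shortest route is to invoke the characterization of graphs whose square is planar due to Harary et al.~\cite{hkw} and to specialize it to trees; the only thing to verify is that, for a tree, the general conditions of that characterization collapse to the single requirement that no vertex has degree greater than $3$. If instead one wants a self-contained argument, I would induct on $|V(T)|$, constructing a planar embedding of $T^2$ by repeatedly deleting a leaf of $T$ (equivalently, inserting the vertices in the reverse order; recall from Proposition~\ref{c} that a leaf-deletion order of $T$ is a perfect elimination ordering of $T^2$). When a leaf $\ell$ with neighbour $u$ is removed to form $T'$, its neighbourhood in $T^2$ is exactly $\{u\} \cup N_{T'}(u)$, which is a clique of size at most $3$ (here $\deg_{T'}(u) \le 2$ since $\deg_T(u) \le 3$). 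So reinserting $\ell$ just adds one new vertex joined to a clique on at most three vertices. To keep this planar I would carry along the invariant that, in the embedding of $(T')^2$, every vertex $w$ with $\deg_{T'}(w) \le 2$ has its clique $\{w\} \cup N_{T'}(w)$ lying on the boundary of a common face; one then places $\ell$ inside a face hosting the clique $\{u\} \cup N_{T'}(u)$ and joins it to those (at most three) vertices.

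The main obstacle is propagating this invariant through the insertion. Subdividing the host face must not destroy the co-faciality needed by the other low-degree vertices whose cliques also touched that face, and afterwards the affected vertices $\ell$ (degree $1$) and $u$ (if it still has degree at most $2$) must be assigned faces. I expect this bookkeeping to be the only delicate point: it is handled by routing the at most three new edges so that the host face is split into triangles while a residual sub-face still carries any clique that previously relied on it, noting also that a clique may lie on several faces simultaneously. The small instances $T = K_1, K_2, P_3$ are checked by hand and serve as the base of the induction.
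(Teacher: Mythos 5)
Your primary route is exactly the paper's: the paper offers no proof of its own, deriving the proposition as a corollary of the characterization of Harary et al.~\cite{hkw} (whose outerplanarity and block conditions are vacuous for trees, so the characterization indeed collapses to $\Delta(T)\leq 3$), and your $K_5$ argument for the forward implication is correct, if likewise subsumed by that citation. One caveat on your optional self-contained induction: as written it is a sketch, not a proof --- the claim that after inserting $\ell$ some sub-face of the host face still carries every clique that previously relied on that face is precisely the crux and is asserted rather than proved; to complete it you would note that $T^2$ is $2$-connected once $|V(T)|\geq 3$, so face boundaries are cycles, and that two cliques lying on a common face cannot interleave around its boundary cycle (edges joining boundary vertices are pairwise non-crossing chords in the closed disk complementary to that face), whence every such clique ends up on the boundary of a single one of the new sub-faces.
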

Finally, we construct graphs which have the properties stated in Proposition~\ref{short2}.

\begin{proof}[Proof of Proposition~\ref{short2}]
We let $T$ be a cubic tree (on at least $4$ vertices)
having a vertex such that the distances from this vertex
to every leaf are the same; and we let $r$ denote this distance. 
By Theorem~\ref{t} and Propositions~\ref{c} and~\ref{p},
$T^2$ is a $1$-tough chordal planar graph.

We let $n$ denote the number of vertices of $T$. 
By simple counting arguments, we get that $n = 3 \cdot 2^r - 2$
(that is, $r = \log_2 \frac{n+2}{3}$)
and that a largest $S(K_{1,3})$-free subtree of $T$ has $4r$ vertices.

Furthermore, $T$ is $S(K_{1,5})$-free and $\FF$-free
(since $T$ is a cubic tree).
We consider a largest $\XX$-free subtree, say $L$, and we show that it has $2r^2+2$ vertices.
We let $L_0$ be the tree obtained from $L$ by removing all leaves of $L$,
and we let $n_i$ be the number of vertices of degree $i$ in $L_0$ (for $i= 1,2,3$).
We note that all vertices of degree $3$ in $L_0$ belong to a common path
(since $L$ is $\XX$-free).
Hence, $n_3 \leq 2r-3$,
and therefore $n_2 \leq (r-2)^2$ and $n_1 \leq 2r - 1$.
Thus, $L$ has at most $n_3 + 2 n_2 + 3n_1 = 2r^2+2$ vertices
(that is, at most $n_3 + n_2 + n_1$ vertices of $L_0$ plus the removed leaves).
Lastly, we note that there is an $\XX$-free subtree of $T$ on $2r^2+2$ vertices.

We conclude that a longest cycle of $T^2$ has $4\log_2 \frac{n+2}{3}$ vertices
and its longest path has $2(\log_2 \frac{n+2}{3})^2 + 2$ vertices
by Theorem~\ref{square}.
\end{proof}


\section{Long paths in $\mathbf{1}$-tough planar $\mathbf{3}$-trees}
\label{s7}
In order to prove Theorem~\ref{79}, we show the following:

\begin{prop}\label{short79}
Let $n$ be a non-negative integer and
let $c(n) = 1 + 62(1 + 22 + \dots + 22^n)$.
Then there exists a $1$-tough planar $3$-tree $H_n$ on $1 + 70(1 + 30 + \dots + 30^n)$ vertices
whose longest cycle has $c(n)$ vertices and whose longest path has
$c(n) + 2 + 2( c(0) + c(1) + \dots + c(n-1))$ vertices.
\end{prop}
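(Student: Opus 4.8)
The plan is to construct the graphs $H_n$ explicitly by recursion and to verify all four assertions of Proposition~\ref{short79} by induction on $n$. First I would fix, once and for all, a planar $3$-tree \emph{frame} $F$ (built by repeatedly adding simplicial vertices of degree $3$, so that the $3$-tree property is automatic) that carries a distinguished ``core'' together with $30$ distinguished triangular slots; the base graph $H_0$ is then the frame in which each slot is capped by a single simplicial vertex, giving $|V(H_0)| = 71$ with $41$ of these vertices lying outside the slots. To pass from $H_{n-1}$ to $H_n$ I would glue a fresh copy of $H_{n-1}$ onto each of the $30$ slots, identifying the boundary triangle of the copy with the slot triangle. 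Writing $V_n = |V(H_n)|$, the $41$ core vertices are reused at every level while the $30$ copies contribute $30 V_{n-1}$, so $V_n = 30 V_{n-1} + 41 = 1 + 70(1 + 30 + \dots + 30^n)$, as claimed. Since each construction step only inserts simplicial vertices of degree $3$ into triangular faces, $H_n$ is again a $3$-tree (Lemma~\ref{3}), and I would design $F$ so that planarity is preserved, certifying it by verifying the hypothesis of Lemma~\ref{emb} (removal of any triangle leaves at most two components) — a property that is inherited under the substitution.

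Next I would show that $H_n$ has toughness exactly $1$. That the toughness is \emph{not} greater than $1$ is essential here: by Theorem~\ref{main} a $3$-tree of toughness greater than $1$ is Hamilton-connected, so the bottleneck that forces short cycles must pin the toughness down to $1$. I would exhibit an explicit separator $X$ inside the core for which $H_n - X$ has precisely $|X|$ components (the copies, together with the parts of the core that $X$ detaches), which gives toughness at most $1$; Proposition~\ref{simpl} guarantees that the substitutions do not inflate the toughness. The reverse inequality — that every separator of $H_n$ leaves at most as many components as its size — I would prove by induction, splitting a given separator into its trace on the core and its trace on each copy $H_{n-1}$ and bounding the components contributed by each part.

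The heart of the argument is the analysis of longest cycles, carried out by induction through the recursion $c(n) = 22\,c(n-1) + 41$. For the lower bound I would route a concrete cycle through the core, using $41$ of its vertices, and through $22$ of the $30$ copies, entering each such copy at one boundary vertex, traversing a long boundary-to-boundary path there, and leaving at another boundary vertex. The crucial observation that explains why $c(n-1)$ (and not the larger longest-path value) governs the contribution of a copy is that the boundary of a copy is a clique: any boundary-to-boundary path inside a copy, closed off by the boundary edge joining its ends, becomes a cycle of $H_{n-1}$, so such a path has at most $c(n-1)$ vertices. The upper bound is the main obstacle: I must show that no cycle can do better, that is, that the frame forces any cycle to traverse at most $22$ of the $30$ copies and to miss the remaining $8$ entirely, while inside each traversed copy it contributes at most $c(n-1)$ vertices by the clique observation. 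This is a careful combinatorial case analysis of how a cycle can enter and leave copies through the limited attaching triangles of the core — the standard shortness bottleneck, adapted to keep the graph a $3$-tree — and it is where the precise geometry of $F$ (and of the graph $B$ of Figure~\ref{f:79}) does the real work.

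Finally, for the longest path I would argue along the same lines and prove $p(n) = c(n) + 2 + 2\bigl(c(0) + \dots + c(n-1)\bigr)$ by induction. The point is that a path has two free ends, so beyond the cyclic contribution $c(n)$ it may, at each end, plunge into one further copy and terminate deep inside rather than returning to the boundary. To capture this I would introduce an auxiliary quantity, namely the longest path in $H_n$ from a boundary vertex to an arbitrary vertex, and set up coupled recursions for the cyclic value, the boundary-to-boundary value, and this boundary-to-free-end value; the surplus of a single descending end over the cyclic contribution telescopes to $\sum_{i<n} c(i)$, and the two free ends together yield the extra $2\bigl(c(0) + \dots + c(n-1)\bigr)$, with the final $+2$ accounting for the two deepest endpoints. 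The lower bound is again an explicit routing, and the delicate step — the one I would spend the most care on, exactly as in the cyclic case — is the upper bound, establishing that the two free ends are the only source of length beyond $c(n)$ and that the frame caps the gain at precisely one descending branch per end per level.
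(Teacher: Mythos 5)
Your proposal follows the same standard substitution construction as the paper: your top-down recursion (a fixed frame plus thirty copies of $H_{n-1}$ glued into its slots) is exactly the paper's bottom-up construction (replace each of the thirty simplicial ``white'' vertices of $H_n$ by a copy of $H_0$) read in the other direction, and your recursions $c(n)=22\,c(n-1)+41$, $V_n=30V_{n-1}+41$ and the telescoping two-ended surplus for paths are arithmetically the ones the paper obtains. The genuine gap is the one you yourself flag and defer: you never construct the base gadget, and the proposition lives or dies with it. Every constant in the statement comes from exhibiting a \emph{concrete} planar $3$-tree $H_0$ on $71$ vertices with $30$ simplicial vertices of degree $3$, and verifying for that specific graph that (i) every cycle contains at most $22$ white vertices and every path at most $22+z$, where $z$ is the number of its white ends; (ii) these bounds are attained by a cycle/path that also sweeps all non-white vertices (your ``explicit routing'' cannot be carried out without the explicit graph); and (iii) $H_0$, together with the graph $H^+_0$ obtained by adding an apex over $u_1,u_2,u_3$, is $1$-tough --- the paper checks this by a discharging argument and then propagates $1$-toughness to all $H_n$ by a gluing lemma (Lemma~\ref{constr2}), rather than by the unspecified separator-splitting induction you sketch. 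Declaring that this is ``where the precise geometry of $F$ does the real work'' and leaving it out is omitting essentially the whole proof: there is no a priori reason that a planar $3$-tree frame exists whose thirty simplicial slots can never all be swept by one cycle (at least eight are always missed) and yet whose toughness does not drop below $1$; producing such a graph \emph{is} Proposition~\ref{short79}.

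Three secondary points. First, your gluing rule contradicts your count: identifying the boundary triangle of each copy with the slot triangle gives $V_n=41+30(V_{n-1}-3)=30V_{n-1}-49$, not $30V_{n-1}+41$; the paper instead attaches each copy by new edges joining its vertices $u_1,u_2,u_3$ to $1,2,3$ vertices of the slot triangle (equivalently, identifies the replaced white vertex with $u_1$), which yields the stated count and keeps the construction a perfect elimination order, hence a $3$-tree. Second, proving that the toughness is \emph{not greater than} $1$ is not ``essential'': the proposition asserts only $1$-toughness, a lower bound; the upper bound is a free byproduct of Theorem~\ref{main} once the short-cycle bound is known, and no step of the proof uses it. Third, your per-copy bound ``at most $c(n-1)$ vertices by the clique observation'' silently assumes each visited copy is traversed in a single pass; with only three attachment vertices a cycle can still ``touch'' a boundary vertex (entering and leaving through it without penetrating the copy), and such degenerate passes, as well as the analogous complications for paths with free ends, must be handled --- this is precisely what the paper's arranged-block lemma (Lemma~\ref{cyc}) and its white-vertex bookkeeping $w(n)=22\,w(n-1)+2$ package once and for all. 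Your coupled boundary-to-boundary and boundary-to-interior recursions are a workable alternative bookkeeping for the path bound, but, like everything else here, they only become a proof once the gadget exists.
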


We note that the desired result follows as a corollary of Proposition~\ref{short79}.

\begin{proof}[Proof of Theorem~\ref{79}]
We consider the sequence of graphs $H_1, H_2, \dots$ given by Proposition~\ref{short79};
and for every $n \geq 0$, we let $f(n)$ denote the number of vertices of $H_n$. 
Clearly,
\begin{equation*}
f(n) = 1 + \tfrac{70}{29}(30^{n+1} - 1)
\quad
\textnormal{and}
\quad
c(n) = 1 + \tfrac{62}{21}(22^{n+1} - 1).
\end{equation*}
Thus,
\begin{equation*}
\lim_{n\to\infty} \log_{f(n)} c(n) = \log_{30}{22},
\end{equation*}
and therefore the considered shortness exponent is at most $\log_{30}{22}$.
\end{proof}

In the remainder of this section, we construct the graphs $H_n$ and prove Proposition~\ref{short79}.
We remark that, as well as in~\cite{chpl},
we shall use the standard construction for bounding the shortness exponent;
the improvement of the bound comes with a choice of a more suitable starting graph $H_0$.
The reasoning behind this choice is similar to the one applied in~\cite{update}.

\begin{figure}[h]
    \centering
    \includegraphics[scale=0.7]{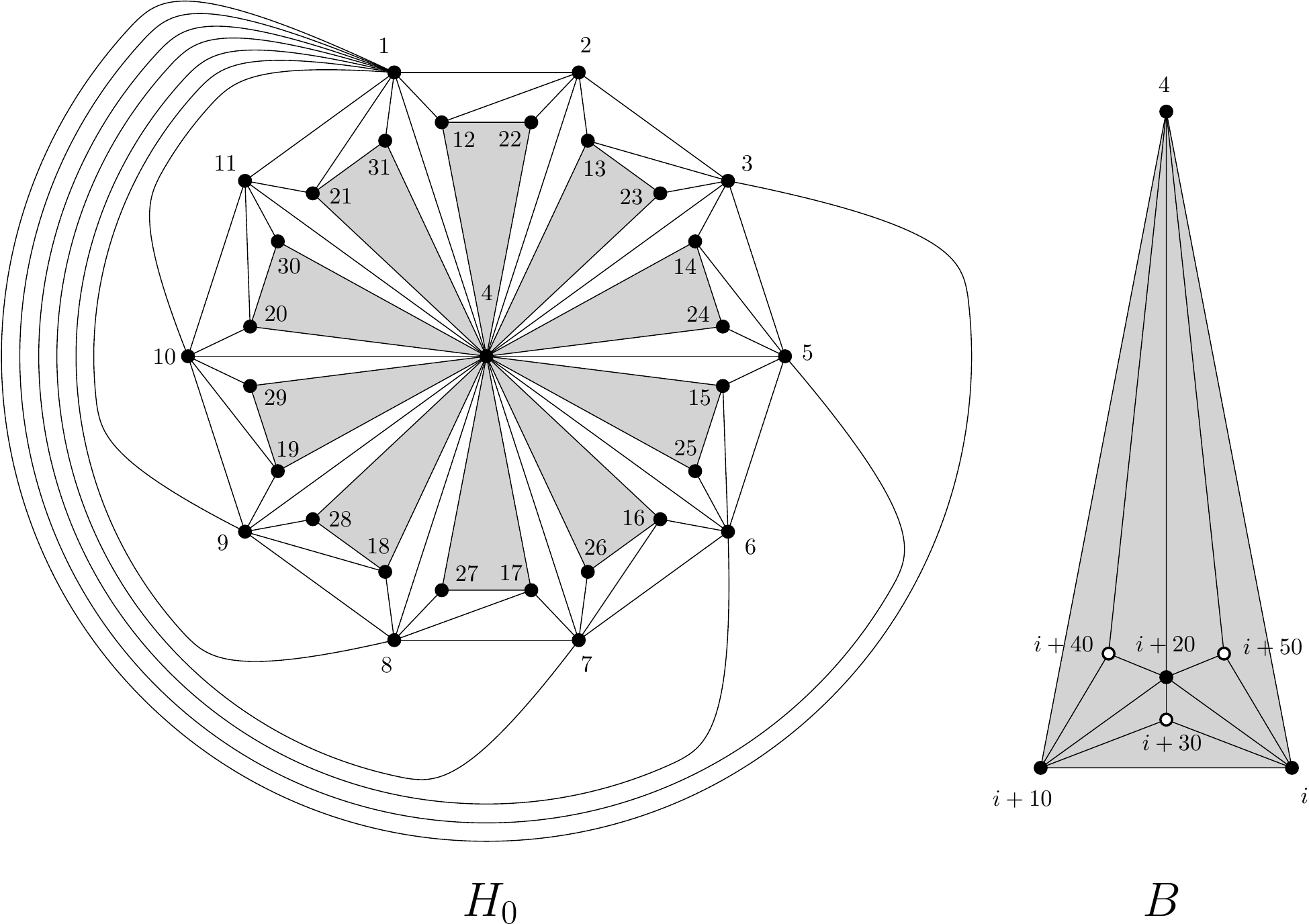}
    \caption{The graph $B$ and the construction of the graph $H_0$.
    The graph $H_0$ is obtained by replacing each of the highlighted triangles
    (of the graph depicted on the left)
    with a copy of $B$ in the natural way
    (by identifying the vertices of the
	highlighted triangle with the vertices of degree $5$ in $B$).
	The numbers represent the ordering of vertices of $H_0$.} 
    \label{f:79}
\end{figure}

We consider the graph $H_0$ constructed in Figure~\ref{f:79};
and we let $u_1, u_2, u_3$ denote the vertices of its outer face in the present embedding.
We note that $H_0$ contains $30$ vertices of degree $3$; and we call these vertices \emph{white}.

For every $n \geq 0$, we let $H_{n+1}$ be a graph obtained from $H_n$
by replacing every white vertex of $H_n$ with a copy of $H_0$
and by adding edges which connect the vertex $u_1, u_2, u_3$ of this copy
to precisely $1,2,3$ neighbours of the replaced vertex, respectively. 
We note the following:

\begin{prop}\label{3-trees}
For every $n \geq 0$, the graph $H_n$ is a planar $3$-tree.
\end{prop}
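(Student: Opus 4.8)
The plan is to argue by induction on $n$, with the invariant that $H_n$ is a planar $3$-tree. For the base case $n=0$ I would read off from the construction in Figure~\ref{f:79} that $H_0$ is a planar $3$-tree: the graph on the left is a planar $3$-tree, and each replacement of a highlighted triangle by a copy of the basic $3$-twig $B$ is just a succession of insertions of simplicial vertices of degree $3$ (the degree-$6$ apex on the triangle, followed by the three further simplicial vertices), each inserted into a triangular face, so the $3$-tree property and a planar embedding are both maintained.

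For the inductive step I would first record the two structural facts about the \emph{white} vertices of $H_n$ that make the argument work. Since a white vertex $w$ has degree $3$ in the $3$-tree $H_n$, and a vertex of degree $k$ in a $k$-tree (on more than $k+1$ vertices) is simplicial, its neighbourhood $N(w)=\{a,b,c\}$ induces a triangle. Moreover the white vertices are pairwise non-adjacent, which is clear from the construction (the degree-$3$ vertices within a copy of $B$ are pairwise non-adjacent and distinct copies share no white vertex). This second fact lets me replace the white vertices one at a time: replacing $w$ only deletes $w$ and adds edges incident to $a,b,c$, so every other white vertex retains its degree, its triangular neighbourhood, and hence its whiteness.

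It then suffices to show that a single replacement turns a planar $3$-tree into a planar $3$-tree, and here I would exploit that a planar $3$-tree is a stacked triangulation, buildable from any triangular face by repeatedly inserting a new vertex into a current triangular face and joining it to the three boundary vertices (each such step preserving both the $3$-tree property and the planar embedding). In the embedding of $H_n$ the vertex $w$, whose only neighbours are $a,b,c$, occupies the region bounded by the triangle $abc$, so deleting $w$ frees the triangular face $abc$. I would then reinsert the copy $C\cong H_0$ by such face-stackings: first $u_3$ into the face $abc$ (joined to $a,b,c$), then $u_2$ into the newly created face $\{b,c,u_3\}$ (joined to $b,c,u_3$), then $u_1$ into the face $\{c,u_2,u_3\}$ (joined to $c,u_2,u_3$); this realises exactly the prescribed connections, namely $u_i$ to $i$ neighbours of $w$, and leaves the triangular face $u_1u_2u_3$ free. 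Into that face I stack the remaining vertices of $C$ following a face-stacking order of $H_0$ rooted at its outer triangle $u_1u_2u_3$. Since every step inserts a vertex into a genuine triangular face, $H_{n+1}$ is again a planar $3$-tree.

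The point that needs care is the compatibility between the prescribed numbers $1,2,3$ of connecting edges and the face-stacking: one must check that, with $u_3$ joined to $\{a,b,c\}$, $u_2$ to $\{b,c\}$ and $u_1$ to $\{c\}$, the triangles $\{b,c,u_3\}$ and $\{c,u_2,u_3\}$ are genuinely present as faces at the moments $u_2$ and $u_1$ are inserted, using these new edges together with the edges $u_2u_3$, $u_1u_2$, $u_1u_3$ forced by the outer triangle of $C$. Once this bookkeeping is settled the remaining verifications are routine. If one prefers to avoid displaying the embedding, an alternative is to establish the $3$-tree property by the same insertion order and then invoke Lemma~\ref{emb}, checking that deleting any triangle from $H_{n+1}$ leaves at most two components; but I expect the direct local face-stacking above to be the cleaner route, since it settles the $3$-tree property and planarity at once.
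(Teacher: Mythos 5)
Your proof is correct and follows essentially the same route as the paper's: induction on $n$, replacing the white vertices one at a time, obtaining the $3$-tree property from the stacking order of $H_0$ read off from Figure~\ref{f:79}, and obtaining planarity by noting that deleting a white vertex frees a triangular facial region into which the copy of $H_0$ together with the six connecting edges can be embedded. The only cosmetic difference is that you handle the $3$-tree property and planarity simultaneously by face-stacking (inserting $u_3$, then $u_2$, then $u_1$, then the rest of the copy), whereas the paper argues the two properties in two separate passes.
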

\begin{proof}
In accordance with the ordering suggested in Figure~\ref{f:79},
we let $u_1, u_2, \dots, u_{71}$ denote the vertices of $H_0$.

We show that the graphs $H_n$ are $3$-trees.
Clearly, $\{ u_1, u_2, u_3 \}$ induces $K_3$,
and we consider adding vertices $u_4, u_5, \dots, u_{71}$ in sequence (in this order),
and we observe that $H_0$ is a $3$-tree (by definition).

We view the replacement of a white vertex by a copy of $H_0$ as  
identifying this white vertex with the vertex $u_1$ of this copy and
adding vertices $u_2, u_3, \dots, u_{71}$ of this copy in sequence,
and we note that the resulting graph is a $3$-tree.
Consequently, $H_n$ is a $3$-tree for every $n \geq 0$.

We consider the planar embedding of $H_0$ given by Figure~\ref{f:79}.
When replacing a white vertex by a copy of $H_0$, we proceed in two steps.
First, we remove the white vertex, and we note that its neighbourhood induces a facial cycle.
Next, we embed a copy of $H_0$ inside this facial cycle, and we observe that the additional edges can be embedded as non-crossing.
We conclude that $H_n$ is planar for every $n \geq 0$.
(Alternatively, the planarity can be observed using Lemma~\ref{emb}.)
\end{proof}

To verify the toughness of the graphs $H_n$,
we shall use the following lemma (shown in~\cite{update}).

\begin{lemma}\label{constr2}
For $i = 1,2$, let $G^+_i$ and $G_i$ be $t$-tough graphs
such that $G_i$ is obtained by removing vertex $v_i$ from $G^+_i$.
Let $U$ be a graph obtained from the disjoint union of $G_1$ and $G_2$ by adding new edges 
such that the minimum degree of the bipartite graph $(N(v_1), N(v_2))$ is at least $t$.
Then $U$ is $t$-tough.
\end{lemma}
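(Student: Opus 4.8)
The plan is to bound the toughness of $U$ directly from the definition. I fix a set $X \subseteq V(U)$ for which $U - X$ is disconnected, write $c$ for the number of components of $U - X$, and aim to show $|X| \geq tc$. I split $X$ along the two sides, setting $X_i = X \cap V(G_i)$, and I let $p_i$ denote the number of components of $G_i - X_i$ for $i = 1,2$. Since the only edges joining $V(G_1)$ and $V(G_2)$ in $U$ are the added bipartite edges, every component of $U - X$ is a union of components of $G_1 - X_1$ and $G_2 - X_2$ that are linked by surviving bipartite edges; as each component of $U-X$ contains at least one such piece and distinct components contain disjoint sets of pieces, I get the bulk bound $c \leq p_1 + p_2$.

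First I would dispose of the generic case $p_1, p_2 \geq 2$. Here each $X_i$ is a separating set of $G_i$, and since $G_i$ is $t$-tough this gives $p_i \leq |X_i|/t$; summing yields $c \leq p_1 + p_2 \leq |X|/t$, as desired. The case $p_i = 0$ for some $i$ is also immediate: then $U - X$ lies entirely in $G_{3-i} - X_{3-i}$, so $c = p_{3-i} \geq 2$ and $t$-toughness of $G_{3-i}$ gives $c \leq |X_{3-i}|/t$. Thus the work concentrates on the boundary cases in which some $p_i$ equals $1$, where the estimate $p_i \leq |X_i|/t$ is no longer available and the summand it was meant to contribute has to be recovered by other means.

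The two hypotheses on $N(v_i)$ are exactly what recovers it, and I would record them as two complementary observations. If $N(v_i) \subseteq X_i$, then $v_i$ is isolated in $G^+_i - X_i$, so this graph has $p_i + 1$ components; when $p_i \geq 1$ this is at least $2$, and $t$-toughness of $G^+_i$ gives $|X_i| \geq t(p_i + 1)$. Otherwise there is a surviving vertex $y \in N(v_i) \sm X_i$; if the component of $U - X$ containing $y$ meets no vertex of $G_{3-i}$, then none of the at least $t$ bipartite neighbours of $y$ survives, so all of them lie in $X_{3-i}$, whence $|X_{3-i}| \geq t$. This extra summand of $t$ is precisely the contribution lost when a $p_i$ drops from $2$ to $1$.

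It then remains to assemble these in the boundary cases. For $p_1 \geq 2$ and $p_2 = 1$, either the unique component of $G_2 - X_2$ is joined to the $G_1$-side by a surviving bipartite edge, in which case two pieces merge and $c \leq p_1 \leq |X_1|/t$; or it forms its own component, so $c = p_1 + 1$, and combining $|X_1| \geq t p_1$ (toughness of $G_1$) with the extra $t$ supplied by the two observations above (either $|X_2| \geq t$ from the minimum-degree condition, or $|X_1| \geq t(p_1+1)$ from $G^+_1$ when $N(v_1) \subseteq X_1$) gives $|X| \geq t(p_1 + 1)$. The symmetric situation $p_2 \geq 2$, $p_1 = 1$ is identical, and the last case $p_1 = p_2 = 1$ (which forces $c = 2$, since the two surviving pieces lie in distinct components) follows by applying the two observations on each side to obtain $|X| \geq 2t$. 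I expect the main obstacle to be exactly this boundary analysis: the clean bound $c \leq p_1 + p_2$ is short by one unit whenever a side contributes a single component, and the whole point of the bipartite minimum-degree hypothesis (together with the toughness of the $G^+_i$) is to pay for that missing unit without double-counting it against the toughness of $G_i$.
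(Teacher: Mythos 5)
Your proof is correct. One point of comparison is moot here: the paper does not actually prove Lemma~\ref{constr2} at all --- it quotes it as a known fact from~\cite{update} --- so there is no in-paper argument to measure yours against; your write-up supplies a self-contained proof of what the paper only cites. The argument itself is sound and complete: splitting a separating set $X$ of $U$ into $X_i = X \cap V(G_i)$ and using $c(U-X) \leq p_1 + p_2$ handles the case $p_1, p_2 \geq 2$ (and $p_i = 0$) by the toughness of $G_1$ and $G_2$ alone, and your two observations recover the missing summand $t$ exactly where it is needed. In the boundary cases, the dichotomy is correctly organized around the side whose neighbourhood $N(v_i)$ is or is not wholly contained in $X_i$: if $N(v_1) \subseteq X_1$, then $v_1$ is an isolated vertex of $G^+_1 - X_1$ and the toughness of $G^+_1$ gives $|X_1| \geq t(p_1+1)$; otherwise a surviving vertex of $N(v_1)$, lying in a component that avoids $G_2$ (which is guaranteed once the lone $G_2$-piece is a standalone component), forces its at least $t$ bipartite neighbours into $X_2$, giving the extra $|X_2| \geq t$ without double-counting against $|X_1| \geq t p_1$. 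The sub-case where the lone piece merges across a surviving bipartite edge (so $c \leq p_1$), and the case $p_1 = p_2 = 1$ (so $c = 2$, settled by applying the dichotomy on each side), are also handled correctly, so all values of $(p_1,p_2)$ are covered.
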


In order to apply Lemma~\ref{constr2}, we determine the toughness 
of $H^+_0$, that is, the graph obtained from $H_0$ by adding
one auxiliary vertex $x$ adjacent to $u_1, u_2$ and~$u_3$.

\begin{prop}\label{1-tough}
The graphs $H^+_0$ and $H_0$ are $1$-tough.
\end{prop}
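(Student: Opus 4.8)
The plan is to prove that $H^+_0$ is $1$-tough; the $1$-toughness of $H_0$ then follows immediately, since $H^+_0$ is obtained from $H_0$ by adding $x$ as a simplicial vertex inside the triangle $u_1 u_2 u_3$, so Proposition~\ref{simpl} gives that the toughness of $H_0$ is at least that of $H^+_0$. (Note also that $H^+_0$ is itself a planar $3$-tree.) Thus the whole task reduces to showing that for every separating set $X$ of $H^+_0$ one has $c(H^+_0 - X) \le |X|$, where $c$ denotes the number of components. I would fix an extremal $X$: among all separating sets minimizing the ratio $|X|/c(H^+_0 - X)$, choose one of smallest size. A standard argument then shows that every vertex of $X$ is adjacent to at least two components of $H^+_0 - X$, since otherwise deleting it from $X$ would not decrease the number of components while decreasing $|X|$, contradicting minimality of the ratio.

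Writing $C_1,\dots,C_m$ for the components of $H^+_0 - X$, the goal becomes to assign to each $C_i$ a distinct vertex of its neighbourhood in $X$; the existence of such a system of distinct representatives is exactly the inequality $m \le |X|$, and by Hall's theorem it is equivalent to the absence of a family $\mathcal S$ of components with $|N(\mathcal S) \cap X| < |\mathcal S|$. To exclude such a family I would combine three ingredients. First, since $H^+_0$ is a $3$-tree it is $3$-connected, so the neighbourhood in $X$ of each component is a separator and hence has at least three vertices. Second, being a $3$-tree, $H^+_0$ is chordal with all minimal separators being triangles, for which Lemma~\ref{emb} bounds the number of components by two; this immediately disposes of the case $m = 2$ (ratio at least $\tfrac{3}{2}$) and more generally of any $X$ that is a minimal separator. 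Third, for the genuinely problematic large cutsets ($m \ge 3$) I would use the explicit structure of Figure~\ref{f:79}: $H^+_0$ is assembled from a fixed base triangulation by stacking a copy of $B$ into each of ten triangular faces, and in a copy of $B$ the three white vertices all lie in the neighbourhood of the apex (the vertex of degree $6$) and of the three vertices of degree $5$ that are identified with a triangle of the base. Their neighbourhoods therefore overlap heavily, so isolating all three white vertices of a copy costs four vertices and yields only three singleton components; moreover the apex of each copy is private to that copy, which supplies a reservoir of representatives unavailable to any other copy.

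The charging that finishes the proof routes each singleton white component to a neighbour inside its own copy (in particular to the private apex when needed) and each remaining component to a private boundary vertex, and the main obstacle is to make this assignment globally consistent, i.e.\ to verify that no cutset admits a Hall-violating family. The danger is a cutset that simultaneously fragments the base triangulation and isolates white vertices while forcing several copies to compete for the same shared degree-$5$ vertices. Ruling this out amounts to a finite but structured case analysis, organized copy-by-copy and exploiting the symmetry of $H_0$ among its three outer vertices, and it is exactly here that the specific choice of the base graph (and of the ten stacked triangles) is essential, since planarity and $3$-connectivity on their own yield only a weaker lower bound on the toughness. Once the assignment is shown to exist for every $X$, we obtain $c(H^+_0 - X) \le |X|$ in all cases, so $H^+_0$ is $1$-tough, and hence so is $H_0$.
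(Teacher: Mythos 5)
Your reduction to $H^+_0$ and the appeal to Proposition~\ref{simpl} are exactly the paper's opening move, and your preliminary observations are correct ($H^+_0$ is a planar $3$-tree, hence $3$-connected; its minimal separators are triangles, which by Lemma~\ref{emb} leave at most two components). But the proof has a genuine gap at its core: the verification that no separating set admits a Hall-violating family is the entire content of the proposition, and you explicitly defer it to ``a finite but structured case analysis, organized copy-by-copy'' that is never carried out. Everything you do prove (the extremal choice of $X$, the Hall reformulation, the cases $m = 2$ and $X$ a minimal separator) is routine and disposes only of the easy cases; the ``genuinely problematic large cutsets'' with $m \geq 3$, which you correctly identify as the danger, are exactly where the argument stops. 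A further minor slip: the existence of a system of distinct representatives is sufficient for $m \leq |X|$ but not literally ``exactly'' equivalent to it; in the toughness setting the equivalence can be recovered (a Hall-violating family $\mathcal{S}$ yields the smaller violating cutset $N(\mathcal{S}) \cap X$), but you do not say this, and as written you are silently proving a stronger statement than needed.

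For comparison, the paper closes this step with two devices your outline lacks, and they are worth internalizing. First, it singles out the vertex $u_4$ of $H_0$: if $u_4$ lies in a component of $H^+_0 - S$, then every other component is a singleton and $|S| > c(H^+_0 - S)$ follows at once, so all of the global interaction you worry about collapses into the single case $u_4 \in S$. Second, in that remaining case it runs a fractional discharging rather than an integral assignment: cut vertices are classified as black (adjacent to a white vertex) or blue (the rest), each component sends its unit charge either to $u_4$, or spread \emph{equally} over all black vertices of $S$, or spread \emph{equally} over all blue vertices of $S$, and one then checks locally that no vertex of $S$ receives more than $1$. The equal spreading is precisely what avoids the global consistency problem that an integral system of distinct representatives creates — the copy-by-copy competition for shared degree-$5$ vertices that you describe but never resolve. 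To repair your proof you would need either to actually execute the case analysis over cutsets of the $72$-vertex graph $H^+_0$, or to find an organizing principle playing the role of the paper's $u_4$-dichotomy together with a fractional (rather than one-to-one) assignment of components to cut vertices.
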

\begin{proof}
We consider a separating set $S$ of vertices of $H^+_0$.
If $u_4$ belongs to a component of $H^+_0 - S$,
then every other component has precisely one vertex,
and we note that $|S| > c(H^+_0 - S)$.

We assume that $u_4$ belongs to $S$.
Except for $u_4$, the vertices adjacent to a white vertex are called \emph{black}.
Except for $u_4$ and $x$, the non-white and non-black vertices are called \emph{blue}.
We consider the set consisting of all white vertices and all black vertices which have no blue neighbour,
and we let $\II$ denote the set of all components of $H^+_0 - S$
whose every vertex belongs to the considered set.

We shall use a discharging argument.
We assign charge $1$ to every component of $H^+_0 - S$,
and we distribute all assigned charge among the vertices of $S$
according to the following rules.
\begin{itemize}
\item The component containing $x$ (if there is such) gives its charge to $u_4$.
\item The total charge of all components of $\II$
is distributed equally among black vertices of $S$.
\item The total charge of all remaining components is distributed 
equally among blue vertices of $S$.
\end{itemize}

We observe that every vertex of $S$ receives charge at most $1$,
that is, $|S| \geq c(H^+_0 - S)$.
Thus, $H^+_0$ is $1$-tough. Consequently, $H_0$ is $1$-tough by Proposition~\ref{simpl}.
\end{proof}

\begin{prop}\label{1-toughHn}
For every $n \geq 0$, the graph $H_n$ is $1$-tough.
\end{prop}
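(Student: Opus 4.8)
The plan is to induct on $n$, building $H_{n+1}$ from $H_n$ one white vertex at a time and invoking the binary gluing operation of Lemma~\ref{constr2} at each step. The base case $n = 0$ is exactly Proposition~\ref{1-tough}. For the inductive step, suppose $H_n$ is $1$-tough, enumerate its white vertices as $w_1, \dots, w_m$, and set $J_0 = H_n$, letting $J_i$ be obtained from $J_{i-1}$ by replacing $w_i$ with a fresh copy of $H_0$ (so that $J_m = H_{n+1}$). I would then show that each $J_i$ is $1$-tough by an inner induction on $i$.

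For the passage from $J_{i-1}$ to $J_i$, I would apply Lemma~\ref{constr2} with $t = 1$, taking $G^+_1 = J_{i-1}$ and $v_1 = w_i$ (so $G_1 = J_{i-1} - w_i$), and $G^+_2 = H^+_0$ and $v_2 = x$ (so $G_2 = H_0$). By construction $J_i$ is precisely the union of $G_1$ and $G_2$ together with the edges joining $N(w_i)$ to $\{u_1, u_2, u_3\} = N(x)$, so it has the form required by the lemma. Of the four graphs that must be $1$-tough, $H^+_0$ and $H_0$ are $1$-tough by Proposition~\ref{1-tough}, the graph $J_{i-1}$ is $1$-tough by the inner induction hypothesis, and $J_{i-1} - w_i$ is $1$-tough because $w_i$ is simplicial in $J_{i-1}$, so Proposition~\ref{simpl} ensures its removal does not lower the toughness.

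It then remains to check the degree condition on the bipartite graph $(N(w_i), N(x))$. The connecting edges join $u_1, u_2, u_3$ to $1, 2, 3$ neighbours of $w_i$, respectively; in particular $u_3$ is joined to all three neighbours of $w_i$. Hence every vertex of $\{u_1, u_2, u_3\}$ and every neighbour of $w_i$ has degree at least $1$ in this bipartite graph, so its minimum degree is at least $t = 1$ and Lemma~\ref{constr2} applies, giving that $J_i$ is $1$-tough.

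The point needing the most care is that the white vertices of $H_n$ are pairwise non-adjacent and each simplicial of degree $3$ (which I would read off from the construction, since white vertices arise as simplicial vertices attached to triangles and are never identified with one another during the gluing). This is what guarantees that replacing $w_1, \dots, w_{i-1}$ leaves the neighbourhood of $w_i$ untouched, so that $w_i$ is still simplicial in $J_{i-1}$ and the appeal to Proposition~\ref{simpl} is legitimate at every step. Once this structural fact is in place, the two nested inductions close without further difficulty.
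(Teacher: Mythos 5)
Your proof is correct and follows essentially the same route as the paper: an iterative construction replacing white vertices one at a time, with Lemma~\ref{constr2} applied at each step, where the current graph and the replaced white vertex play the roles of $G^+_1$ and $v_1$, and $H^+_0$, $H_0$ (from Proposition~\ref{1-tough}) play the roles of $G^+_2$, $G_2$. In fact you are somewhat more careful than the paper, which leaves implicit both the bipartite degree check and the fact that $J_{i-1}-w_i$ is $1$-tough (via the simpliciality and pairwise non-adjacency of white vertices together with Proposition~\ref{simpl}).
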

\begin{proof}
By Proposition~\ref{1-tough}, $H^+_0$ and $H_0$ are $1$-tough.
We consider an iterative construction of $H_n$
(replacing white vertices by copies of $H_0$ in sequence).
We shall apply Lemma~\ref{constr2}.
The graph at a current iteration plays the role of $G^+_1$ and the replaced vertex the role of $v_1$,
and $H^+_0$ and $H_0$ play the role of $G^+_2$ and $G_2$.
Using Lemma~\ref{constr2} repeatedly, 
we note that in each step of the construction we obtain a $1$-tough graph.
We conclude that $H_n$ is $1$-tough.
\end{proof}

We recall the standard construction for bounding the shortness exponent
(this construction produces graphs whose longest cycles are relatively short).
The idea of the construction is formalized in the following definition
and in Lemma~\ref{cyc} (which was proven in~\cite{update}).

An \emph{arranged block} is a $5$-tuple $(G_0, j, W, O, k)$
where $G_0$ is a graph, $j$ is the number of vertices of $G_0$,
and $W$ and $O$ are disjoint sets of vertices of $G_0$
such that the vertices of $W$ are simplicial and independent 
and $O$ induces a complete graph
and such that every cycle in $G_0$ contains at most $k$ vertices of $W$.

\begin{lemma}\label{cyc}
Let $(G_0, j, W, O, k)$ be an arranged block such that $k \geq 1$. 
For every $n \geq 1$, let $G_n$ be a graph obtained from $G_{n-1}$
by replacing every vertex of $W$ with a copy of $G_0$ (which contains $W$ and $O$),
and by adding arbitrary edges which connect the neighbourhood of the replaced vertex
with the set $O$ of the copy of $G_0$ replacing this vertex.
Then $G_n$ has $1 + (j - 1)(1 + |W| + \dots + |W|^n)$ vertices and its longest cycle has at most
$1 + (\ell - 1)(1 + k + \dots + k^n)$
vertices where $\ell = j - |W| + k$.
\end{lemma}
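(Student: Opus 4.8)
The plan is to prove both assertions by induction on $n$, after first reading the iterated construction ``from the outside''. I would begin by recording that for every $n \ge 1$ the graph $G_n$ is isomorphic to the graph obtained from $G_0$ by replacing each vertex of $W$ with a copy of $G_{n-1}$ (attaching, exactly as in the construction, the neighbourhood of the replaced vertex to the set $O$ of the inserted copy). This holds because each step blows up the currently deepest copies of $G_0$, so $G_n$ is a rooted tree of copies of $G_0$ of depth $n$; peeling off the root copy exhibits its $W$-vertices as carrying copies of $G_{n-1}$. The outer view is what makes the top-level restriction ``at most $k$ vertices of $W$ per cycle'' directly usable.

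For the vertex count the induction is immediate. Replacing one vertex of $W$ by a copy of $G_{n-1}$ changes the number of vertices by $|V(G_{n-1})| - 1$, and there are $|W|$ such replacements, so $|V(G_n)| = (j - |W|) + |W|\cdot|V(G_{n-1})|$. With $|V(G_0)| = j$ this recurrence solves to $1 + (j-1)(1 + |W| + \dots + |W|^n)$, a routine check.

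For the bound on the longest cycle, set $M_n = 1 + (\ell - 1)(1 + k + \dots + k^n)$ and observe that $M_n = (j - |W|) + k\,M_{n-1}$ (using $\ell - k = j - |W|$) with $M_0 = \ell$. The base case $n = 0$ is exactly the arranged-block hypothesis: a cycle of $G_0$ uses at most the $j - |W|$ vertices outside $W$ together with at most $k$ vertices of $W$, hence at most $\ell$ vertices. For the inductive step, fix a cycle $C$ of $G_n$ and split its vertices into those lying in the top-level non-$W$-vertices of $G_0$ (there are $j - |W|$ of these, and by the outer decomposition they are the only vertices outside the inserted copies) and those lying inside the inserted copies $H_v$. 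Two local facts drive the estimate. First, each copy $H_v$ is attached to the rest of $G_n$ only through its clique $O_v$, so $C \cap V(H_v)$ is a vertex-disjoint union of subpaths of $C$ with all endpoints in $O_v$; since $O_v$ is complete, these subpaths can be joined by edges of $O_v$ into a single cycle of $H_v \cong G_{n-1}$ (the cases of at most two vertices being trivial), whence $|C \cap V(H_v)| \le M_{n-1}$ by the induction hypothesis. Second, $C$ meets at most $k$ of the copies. Granting these, $|V(C)| \le (j - |W|) + k\,M_{n-1} = M_n$, completing the induction.

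The main obstacle is the second local fact, that $C$ enters at most $k$ copies. The idea is to project $C$ to a cycle of $G_0$ retaining exactly the replaced vertices $v$ whose copies $C$ visits: each internal arc of $C$ inside a copy $H_v$ enters and leaves through neighbours of $v$ in $G_0$ (these are non-$W$-vertices, because $W$ is independent), so such an arc can be replaced by the length-two path through $v$. This yields a closed walk in $G_0$ through all the visited $v$'s, and the only repetitions occur at those $v$'s that are entered more than once. Here I would use that each $v \in W$ is \emph{simplicial} in $G_0$, so its neighbourhood is a clique and any multiple passage through $v$ can be short-circuited by a single edge of that clique, merging the passages into one while keeping $v$ on the walk. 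Iterating removes all repetitions and produces a genuine cycle of $G_0$ containing every visited $v$ exactly once; the arranged-block property then bounds the number of such $v$ by $k$. The delicate point is carrying out these reroutings without destroying the cycle, i.e.\ ensuring the clique edges used are new and the resulting vertex sequence stays simple; everything else is bookkeeping.
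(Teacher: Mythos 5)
Your proposal is correct, but there is nothing in the paper itself to compare it with: the paper does not prove Lemma~\ref{cyc} at all, it defers the proof to the author's earlier work (the reference cited just before the lemma). Judged on its own merits, your argument works. The ``outer view'' is legitimate because $W$ is independent and disjoint from $O$, so every deepest $W$-vertex has its entire neighbourhood inside its own copy of $G_0$; hence the replacement operations at distinct $W$-vertices are independent and commute, and $G_n$ is indeed obtained from $G_0$ by substituting admissible $(n-1)$-st iterates for the $W$-vertices. (Strictly, the inserted graphs $H_v$ need not be mutually isomorphic, since the connecting edges are arbitrary; but your induction hypothesis quantifies over all admissible $(n-1)$-st iterates, so this is harmless.) Your two local facts also hold: the arcs of $C$ inside $H_v$ have distinct endpoints in the clique $O_v$ and can be concatenated by clique edges into a single cycle of $H_v$, and your projection onto the root copy --- replacing each arc by the length-two path through the replaced vertex $v$ and merging multiple passages through $v$ via edges of the clique $N_{G_0}(v)$ --- produces a simple cycle, since root vertices of $C$ occur once each and the predecessor and successor of an arc are distinct whenever $C$ is not contained in one copy plus a single vertex; the degenerate cases, where $C$ meets at most one copy, satisfy the bound trivially because $k \geq 1$.

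It is worth contrasting your route with the closest argument that \emph{does} appear in the paper, namely the suppression argument for paths in the proof of Proposition~\ref{short79}. There the projection goes only one level down, from $H_n$ to $H_{n-1}$, and the induction must carry a second invariant (the number $w(n)$ of white vertices on a path) in order to bound how many copies the path can visit. Your projection goes all the way down to the root copy of $G_0$, so the arranged-block hypothesis caps the number of visited copies at $k$ in one stroke and no auxiliary invariant is needed; the price is the commuting-replacements observation establishing the outer decomposition. Both are valid instances of the standard shortness-exponent argument.
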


Finally, we show that the constructed graphs $H_n$ have all properties
stated in Proposition~\ref{short79}.

\begin{proof}[Proof of Proposition~\ref{short79}]
By Propositions~\ref{3-trees} and~\ref{1-toughHn},
$H_n$ is a $1$-tough planar $3$-tree (for every $n \geq 0$).
By a simple counting argument,
we get that $H_n$ has $1 + 70(1 + 30 + \dots + 30^n)$ vertices.

We observe that a path in $H_0$ contains at most $22 + z$ white vertices
where $z$ is the number of white ends of the path.
In particular, every cycle in $H_0$ contains at most $22$ white vertices.
By Lemma~\ref{cyc}, a longest cycle in $H_n$ has at most $c(n)$ vertices.

We let $p(n) = c(n) + 2 + 2( c(0) + c(1) + \dots + c(n-1))$ and $w(n) = 22^{n+1} + 2(1 + 22 + \dots + 22^n)$.
For the sake of induction, we show that every path in $H_n$ has at most
$p(n)$ vertices, and furthermore that it contains at most $w(n)$ white vertices
(a similar idea was used in~\cite{update}).
We note that the claim is satisfied for $n = 0$, and we proceed by induction on $n$.

We let $P$ be a path in $H_n$, and we consider suppressing vertices of $P$ as follows.
For every newly added copy of $H_0$,
we suppress all but one vertex of the copy
and we replace the remaining vertex (if there is such) by the corresponding replaced vertex of $H_{n-1}$;
and we let $P'$ be the resulting graph.
Since the neighbourhood of every replaced vertex induces a complete graph,
$P'$ is a path;
and we view $P'$ as a path in $H_{n-1}$.
By the hypothesis, $P'$ contains at most $w(n-1)$ white vertices.
Thus, $P$ visits at most $w(n-1)$ of the newly added copies of $H_0$.

Similarly, we choose an arbitrary newly added copy of $H_0$, and
we suppress all vertices of $P$ not belonging to this copy.
Since $\{ u_1, u_2, u_3 \}$ induces a complete graph, the resulting graph 
is a path in $H_0$ (possibly empty or trivial).
Considering such paths for all newly added copies of $H_0$,
and considering the set of all their ends, we note that at most two white vertices belong to this set.
Hence, in total these paths contain at most $63 \cdot w(n-1) + 2$ vertices.
We note that
\begin{equation*}
p(n) = p(n-1) - w(n-1) + 63 \cdot w(n-1) + 2.
\end{equation*}
Thus, $P$ has at most $p(n)$ vertices.
Furthermore, we note that $P$ contains at most $w(n) = 22 \cdot w(n-1) + 2$ white vertices.

To conclude the proof, we extend the earlier observation as follows.
In fact, there are paths in $H_0$ containing $22 + z$ white and all non-white vertices
such that all non-white ends belong to $\{ u_1, u_2\}$.
Using these paths,
we observe that $H_n$ has a cycle on $c(n)$ vertices
and a path on $p(n)$ vertices.
\end{proof}


\section{On \emph{k}-trees of toughness greater than one}
\label{s8}
To conclude the paper,
we remark that for every $k \geq 4$, there are 
$k$-trees of toughness greater than $1$ whose longest paths are relatively short.
For brevity, we omit enumerating the exact length of these paths.

We consider the $1$-tough $3$-trees $H_n$ given by Proposition~\ref{short79}.
Clearly, adding a universal vertex to a $k$-tree gives a $(k+1)$-tree.
For every $k \geq 4$ and every $n \geq 0$, we let $H_{n,k}$
denote the graph obtained by adding $k-3$ universal vertices to $H_n$;
and we note that $H_{n,k}$ is a $k$-tree of toughness greater than $1$.

We consider a path in $H_{n,k}$.
We remove the universal vertices of $H_{n,k}$ from this path,
and we view the resulting forest (whose components are paths) as a subgraph of $H_n$.  
By Proposition~\ref{short79}, every path of this forest is relatively short.
Consequently, we observe that for every $k \geq 4$, there exists
$n_0$ such that if $n \geq n_0$, then a longest path in $H_{n,k}$ is relatively short.
(We note that the same idea can be applied to the graphs constructed in~\cite{chpl}.)

\section*{Acknowledgement}

The author would like to thank Jakub Teska for his mentorship and for inspiring discussions  
(which led to a weaker version of Theorem~\ref{0}) which partly motivated this study,
and to thank the anonymous referees for their helpful suggestions and comments.


\end{document}